\newtheorem{defn}{Definition}[section]
\newtheorem{lem}[defn]{Lemma}
\newtheorem{theo}[defn]{Theorem}
\newtheorem{cor}[defn]{Corollary}
\newtheorem{prop}[defn]{Proposition}
\newtheorem{error}{Error}[section]
\newtheorem{exam}[error]{Example}
\theoremstyle{remark}
\newcommand{\Romannum}[1]{\uppercase\expandafter{\romannumeral #1}}
\numberwithin{equation}{section}
\newcommand\keywordsname{Key words}
\newcommand\AMSname{AMS subject classifications}
\newenvironment{@abssec}[1]{%
     \if@twocolumn
       \section*{#1}%
     \else
       \vspace{.05in}\footnotesize
       \parindent .2in
         {\upshape\bfseries #1. }\ignorespaces
     \fi}
     {\if@twocolumn\else\par\vspace{.1in}\fi}
\begin{document}

\vskip6cm
\title{Some sharp bounds on the distance signless Laplacian spectral radius of graphs
\footnote{Research supported by National Natural Science Foundation of China
(No. 10901061), the Zhujiang Technology New Star Foundation of
Guangzhou (No. 2011J2200090), and Program on International Cooperation and Innovation, Department of Education, Guangdong Province (No.
2012gjhz0007).}}
\author{Wenxi Hong, Lihua You\footnote{{\it{Corresponding author:\;}}ylhua@scnu.edu.cn.}}
\vskip.2cm
\date{{\small
School of Mathematical Sciences, South China Normal University,\\
Guangzhou, 510631, P.R. China\\
}} \maketitle

\begin{abstract}

 \vskip.3cm
 M. Aouchiche and P. Hansen proposed the distance Laplacian and the distance signless Laplacian of a connected graph [Two Laplacians for the distance matrix of a graph, LAA 439 (2013) 21--33]. In this paper, we obtain three theorems on the sharp upper bounds of the spectral radius of a nonnegative matrix, then apply these theorems to signless Laplacian matrices and the distance signless Laplacian matrices to obtain some sharp bounds on the spectral radius, respectively. We also proposed a known result about the sharp bound of the signless Laplacian spectral radius has a defect.
\vskip.2cm \noindent{\it{AMS classification:}} 05C12; 05C50; 15A18
 \vskip.2cm \noindent{\it{Keywords:}} Distance; Signless Laplacian; Spectral radius; Bounds; Graph.
\end{abstract}

\section{ Introduction}
\hskip.6cm
Let $G$ be a simple graph with vertices set $V(G)=\{v_1, v_2, \ldots, v_n\}$. Let $d_i$ be the degree of the vertex $v_i$ in $G$ for $i=1, 2, \ldots, n$ and satisfy $d_1\geq d_2\geq \cdots\geq d_n$. Let $A(G)=(a_{ij})_{n\times n}$ be the $(0,1)$--adjacency matrix of $G$, where $a_{ij}=1$ if $v_i$ and $v_j$ are adjacent and 0 otherwise, and $D(G)=diag(d_1, d_2, \ldots, d_n)$ be the degree diagonal matrix. Then $Q(G)=D(G)+A(G)$ is the signless Laplacian matrix of $G$.

The distance matrix $\mathcal{D}(G)$ of $G$ is defined so that its $(i, j)$--entry, $d_{ij}$, is equal to $d_G(v_i, v_j)$ which denotes the distance (the length of the shortest path) between $v_i$ and $v_j$. Clearly, the distance matrix of a connected graph is irreducible and symmetric. The eigenvalues of $\mathcal{D}(G)$ are given as $\delta_1(G)\geq\delta_2(G)\geq\cdots\geq\delta_n(G)$, where $\delta_1$ is called the distance spectral radius of $G$.

The transmission of $v_i$ is defined to be the sum of the distance from $v_i$ to all other
vertices in $G$, denoted by $\mathcal{D}_i$, that is, $\mathcal{D}_i=\sum\limits_{j=1}^{n}d_{ij}$. The transmission is also called the first distance degree \cite{2010b}. Assume that the transmissions are ordered as $\mathcal{D}_1\geq \mathcal{D}_2 \geq\cdots \geq \mathcal{D}_n$. A connected graph $G$ is said to be $k$--transmission regular if $\mathcal{D}_i=k$ for all $i\in\{1, 2 ,\ldots, n\}$. Let $Tr(G)=diag(\mathcal{D}_1, \mathcal{D}_2, \ldots, \mathcal{D}_n)$ denote the diagonal matrix of its vertex transmissions.

M. Aouchiche and P. Hansen \cite{2013} introduced the Laplacian and the signless Laplacian for the distance matrix of a connected graph. The matrix $\mathcal{D}^{L}=Tr(G)-\mathcal{D}(G)$ is called the distance Laplacian of $G$, while the matrix $\mathcal{D}^{Q}=Tr(G)+\mathcal{D}(G)$ is called the distance signless Laplacian of $G$.

Let $G$ be a connected graph, then the matrix $\mathcal{D}^{Q}=(q_{ij})$ is symmetric, nonnegative and irreducible. All the eigenvalues of $\mathcal{D}^Q$ can be arranged as:
$\delta_1^Q(G)\geq \delta_2^Q(G)\geq\cdots\geq \delta_n^Q(G)$.
$\delta_1^Q(G)$ is called the distance signless Lplacian radius of $G$. As the $\mathcal{D}^{Q}$ is irreducible, by the Perron--Frobenius theorem,
 $\delta_1^Q(G)$ is positive, simple and there is a unique positive unit eigenvector $x$ corresponding to
$\delta_1^Q(G)$, which is called the distance signless Perron vector of $G$.

As usual, we denote by $K_n$ the complete graph, by $C_n$ the cycle, by $S_n$ the star graph and by $K_{a,n-a}$ the complete bipartite graph. In \cite{2013}, M. Aouchiche and P. Hansen give the lower sharp bound as $\delta_1^Q(G)\geq \delta_1^Q(K_n)=2n-2$.

In this paper, we obtain three theorems on the sharp upper bounds of the spectral radius of a nonnegative matrix, and apply these theorems to the signless Laplacian matrix to get the known results about the signless Laplacian spectral radius of graphs in Section 2. In Section 3, we present some sharp bounds on the distance signless Laplacian spectral radius of graphs.

\section{Three theorems on the spectral radius of a nonnegative matrix}

\hskip.6cm
In this section, we obtain three theorems on the sharp upper bounds of the spectral radius of a nonnegative matrix. Then we apply these theorems to the signless Laplacian matrix and obtain the known results about the signless Laplacian spectral radius of graphs. We also proposed a known result about the sharp bound of the signless Laplacian spectral radius has a defect.
\begin{lem}\label{lem 2.2}{\rm(\cite{1988})}
If $A$ is an $n\times n$ nonnegative matrix with the spectral radius $\lambda(A)$ and row sums $r_1, r_2, \ldots, r_n$, then
$\min\limits_{1\leq i\leq n}r_i\leq \lambda(A)\leq \max\limits_{1\leq i\leq n}r_i$. Moreover, if $A$ is irreducible, then one of the equalities holds if and only if the row sums of $A$ are all equal.
\end{lem}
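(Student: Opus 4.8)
The plan is to invoke the Perron--Frobenius theorem, which guarantees that $\lambda(A)$ is itself an eigenvalue of the nonnegative matrix $A$ and admits a nonnegative eigenvector $x=(x_1,\ldots,x_n)^{T}\neq 0$; when $A$ is irreducible this eigenvector may be taken strictly positive. For the two inequalities, the idea is to test the eigenvalue equation $Ax=\lambda(A)x$ at the components where $x$ is largest and smallest. First I would pick an index $p$ with $x_p=\max_i x_i>0$ and read off row $p$: since $a_{pj}\geq 0$ and $x_j\leq x_p$, we get $\lambda(A)x_p=\sum_j a_{pj}x_j\leq \sum_j a_{pj}x_p=r_p x_p$, and dividing by $x_p$ yields $\lambda(A)\leq r_p\leq\max_i r_i$. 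The lower bound is symmetric whenever $x$ is strictly positive: choosing $q$ with $x_q=\min_i x_i>0$ and using $x_j\geq x_q$ gives $\lambda(A)\geq r_q\geq\min_i r_i$. For a general nonnegative $A$ whose eigenvector may have zero entries, I would instead obtain the lower bound by a perturbation argument, applying the strictly positive case to $A+\varepsilon J$ (with $J$ the all--ones matrix), whose row sums are $r_i+n\varepsilon$, and letting $\varepsilon\to 0^{+}$ using the continuity of the spectral radius.

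For the equality statement assume $A$ is irreducible, so $x>0$. The easy direction is that if all row sums equal some common value $r$, then the all--ones vector $e$ satisfies $Ae=re$, so $r$ is an eigenvalue with a positive eigenvector; by Perron--Frobenius $r=\lambda(A)$, and since $\min_i r_i=\max_i r_i=r$ both inequalities become equalities simultaneously.

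The converse is the main point and the step I expect to cost the most effort. Suppose the upper equality $\lambda(A)=\max_i r_i=:R$ holds, and let $M=\{i:x_i=\max_j x_j\}$. Running the maximum--component computation above for every $i\in M$ forces each such chain of inequalities to collapse to equalities; in particular $r_i=R$ and $\sum_j a_{ij}(x_p-x_j)=0$, which (as each summand is nonnegative) gives $a_{ij}=0$ for every $j$ with $x_j<x_p$, i.e.\ $a_{ij}=0$ for all $j\notin M$. Thus $M$ sends no nonzero entry to its complement; if $M$ were a proper subset this would contradict the irreducibility of $A$. Hence $M=\{1,\ldots,n\}$, so $x$ is a constant vector, and then $Ax=\lambda(A)x$ immediately forces $r_i=\lambda(A)$ for all $i$, i.e.\ all row sums are equal. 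The lower equality $\lambda(A)=\min_i r_i$ is handled by the mirror-image argument applied to the minimum set $N=\{i:x_i=\min_j x_j\}$. The only subtlety to watch throughout is the strict positivity of $x$, which legitimizes the divisions and makes the extremal sets interact correctly with irreducibility; this is exactly what the irreducibility hypothesis supplies.
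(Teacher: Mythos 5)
The paper gives no proof of this lemma --- it is quoted verbatim as a known result from Minc's \emph{Nonnegative Matrices} \cite{1988} --- so there is nothing internal to compare against. Your argument is the standard Perron--Frobenius proof and it is correct and complete: the extremal-component estimates give both bounds, the perturbation by $\varepsilon J$ correctly patches the lower bound when the eigenvector of a reducible nonnegative matrix has zero entries, and the equality analysis (collapsing the chain of inequalities on the set $M$ of maximal components and invoking irreducibility to force $M=\{1,\ldots,n\}$) is exactly the textbook treatment. No gaps.
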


\begin{theo}\label{thm 3.1}
Let $A$ be an $n\times n$ nonnegative, irreducible and symmetric matrix with row sums $r_1, r_2, \ldots, r_n$, where $r_1\geq r_2\geq\ldots\geq r_n$, $B=A+M$ where $M=diag(r_1, r_2, \ldots, r_n)$, and $\lambda(A)$ $(\lambda(B))$ the largest eigenvalue of $A$ $(B)$. Then
$\lambda(B)\leq \lambda(A)+ r_1$ with equality if and only if the row sums of $A$ are all equal.
\end{theo}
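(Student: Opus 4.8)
The plan is to exploit the symmetry of $A$ (hence of $B$) and argue through the Rayleigh quotient rather than through Lemma~\ref{lem 2.2} directly, since the latter only yields the weaker estimate $\lambda(B)\le 2r_1$: indeed the $i$-th row sum of $B=A+M$ is $r_i+r_i=2r_i$, with maximum $2r_1$. Writing $M=\mathrm{diag}(r_1,\ldots,r_n)$, for any unit vector $x$ the quadratic form splits as
\[
x^{T}Bx = x^{T}Ax + x^{T}Mx = x^{T}Ax + \sum_{i=1}^{n} r_i x_i^2 .
\]

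First I would bound the two pieces separately. Since $A$ is symmetric with largest eigenvalue $\lambda(A)$, the variational characterization gives $x^{T}Ax\le \lambda(A)$ for every unit vector $x$. Since $r_1=\max_i r_i$, we also have $\sum_i r_i x_i^2\le r_1\sum_i x_i^2 = r_1$. Adding these, $x^{T}Bx\le \lambda(A)+r_1$ holds for all unit $x$; because $B$ is symmetric we have $\lambda(B)=\max_{\|x\|=1}x^{T}Bx$, and taking the supremum yields the claimed inequality $\lambda(B)\le \lambda(A)+r_1$.

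For the equality characterization I would invoke the Perron--Frobenius theorem. The \emph{if} direction is immediate: if all row sums are equal, say to $r_1$, then $M=r_1 I$, so $B=A+r_1 I$ and $\lambda(B)=\lambda(A)+r_1$. For \emph{only if}, let $x$ be a unit Perron eigenvector of $B$; since $B=A+M\ge A$ is nonnegative and irreducible, $x$ may be taken strictly positive. If $\lambda(B)=\lambda(A)+r_1$, then the chain
\[
\lambda(A)+r_1 = x^{T}Bx = x^{T}Ax + \sum_{i=1}^{n} r_i x_i^2 \le \lambda(A)+r_1
\]
forces both intermediate inequalities to be equalities; in particular $\sum_i (r_1-r_i)x_i^2=0$. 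Since every $x_i>0$ and $r_1-r_i\ge 0$, this forces $r_i=r_1$ for all $i$, so all row sums of $A$ coincide.

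The main obstacle is precisely this equality analysis: the two separate bounds $x^{T}Ax\le \lambda(A)$ and $\sum_i r_i x_i^2\le r_1$ could in principle be attained by different vectors, so I must exhibit a single vector attaining both simultaneously. The strict positivity of the Perron eigenvector of $B$ is exactly what rules out the degenerate alternative in which equality in the transmission-type bound holds merely because $x$ is supported on the indices of maximal row sum, thereby pinning down that \emph{all} row sums must be equal.
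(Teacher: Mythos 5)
Your proposal is correct and follows essentially the same route as the paper: both split the Rayleigh quotient of $B$ as $x^{T}Ax+\sum_i r_i x_i^2$ and bound the two pieces by $\lambda(A)$ and $r_1$ respectively. Your treatment of the equality case is in fact more careful than the paper's (which merely appeals to Lemma~\ref{lem 2.2}), since you use the strict positivity of the Perron vector of $B$ to force $r_i=r_1$ for all $i$, and you verify the converse directly from $B=A+r_1I$.
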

\begin{proof}
Let $x=(x_1, x_2, \ldots, x_n)^T$ be the unit positive vector corresponding to $\lambda(B)$, then
\vskip.2cm
\hskip0.8cm$\lambda(B)=x^{T}Bx$
\vskip.2cm
\hskip1.8cm$=x^{T}(A+M)x$
\vskip.2cm
\hskip1.8cm$=x^{T}Mx+x^{T}Ax$
\vskip.2cm
\hskip1.8cm$=\sum\limits_{i=1}^nx_i^2r_i+x^{T}Ax$
\vskip.2cm
\hskip1.8cm$\leq \sum\limits_{i=1}^nx_i^2r_1+x^{T}Ax$
\vskip.2cm
\hskip1.8cm$\leq r_1+\lambda(A)$.
\vskip.2cm
Moreover, when the equality holds, then the row sums of $A$ are all equal by Lemma \ref{lem 2.2}. Conversely, when the row sums of $A$ are all equal, then $\lambda(B)=2r_1$ and $\lambda(A)=r_1$ by Lemma \ref{lem 2.2}. The equality holds.
\end{proof}

Let $G$ be a simple and connected graph. Let $A$ be the adjacency matrix of $G$ and $M$ be the degree diagonal matrix of $G$. Then $B=A+M$ is the signless Laplacian matrix of $G$. So we can get the following corollary by Theorem \ref{thm 3.1}.
\begin{cor}\label{cor 3.1}{\rm(\cite{2010a}, Lemma 9)}
Let $G$ be a simple and connected graph on $n$ vertices, $\lambda_1(G)$ be the spectral radius of $G$, $q_1(G)$ be the signless Laplacian spectral radius of $G$ and $\Delta$ be the maximum degree of $G$. Then $q_1(G)\leq\lambda_1(G)+\Delta$ with equality if and only if $G$ is regular.
\end{cor}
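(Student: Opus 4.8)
The plan is to apply Theorem \ref{thm 3.1} directly, taking $A$ to be the adjacency matrix $A(G)$ and $M$ to be the degree diagonal matrix $D(G)$, as already indicated in the paragraph preceding the corollary. First I would check that $A(G)$ meets the hypotheses of Theorem \ref{thm 3.1}. Since $G$ is simple, $A(G)$ is a symmetric $(0,1)$-matrix, hence nonnegative and symmetric; and since $G$ is connected, $A(G)$ is irreducible. Thus Theorem \ref{thm 3.1} is applicable.

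Next I would match up the quantities appearing in the theorem with those in the corollary. The $i$-th row sum of $A(G)$ is $\sum_j a_{ij} = d_i$, the degree of $v_i$, so the row sums of $A(G)$ are exactly $d_1 \geq d_2 \geq \cdots \geq d_n$; in particular $r_1 = d_1 = \Delta$. Consequently $M = \operatorname{diag}(r_1, \ldots, r_n) = \operatorname{diag}(d_1, \ldots, d_n) = D(G)$, and $B = A(G) + D(G) = Q(G)$ is precisely the signless Laplacian matrix. Because $A(G)$ is nonnegative and irreducible, the Perron--Frobenius theorem identifies $\lambda(A)$ with $\lambda_1(G)$, the spectral radius of $G$; likewise $\lambda(B) = q_1(G)$ is the signless Laplacian spectral radius.

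Substituting these identifications into the conclusion $\lambda(B) \leq \lambda(A) + r_1$ of Theorem \ref{thm 3.1} yields $q_1(G) \leq \lambda_1(G) + \Delta$ immediately. For the equality case, Theorem \ref{thm 3.1} asserts that equality holds if and only if the row sums of $A$ are all equal; since the row sums here are the degrees $d_i$, this condition reads $d_1 = d_2 = \cdots = d_n$, which is exactly the statement that $G$ is regular.

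The argument is essentially a dictionary between the matrix language of Theorem \ref{thm 3.1} and the graph language of the corollary, so I do not anticipate any serious obstacle. The only points requiring genuine care are verifying that all three hypotheses (nonnegativity, symmetry, irreducibility) are inherited from ``$G$ simple and connected,'' and confirming that the equality condition ``all row sums equal'' translates faithfully into regularity of $G$.
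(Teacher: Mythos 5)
Your proposal is correct and follows exactly the paper's route: the paper also derives this corollary by applying Theorem \ref{thm 3.1} with $A=A(G)$, $M=D(G)$, and $B=Q(G)$, identifying the row sums with the degrees so that $r_1=\Delta$ and the equality condition (all row sums equal) becomes regularity of $G$. Your write-up is in fact more careful than the paper's one-line justification, since you explicitly verify the hypotheses of nonnegativity, symmetry, and irreducibility.
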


\begin{defn}\label{defn 2.2}{\rm(\cite{1979})}
 Let $(a)=(a_1, a_2, \ldots, a_r)$ and $(b)=(b_1, b_2, \ldots, b_s)$ are nonincreasing
\vskip.1cm
\noindent sequences of real numbers. Then $(a)$ majorizes $(b)$ if $(a)$ and $(b)$ satisfy the  two conditions:

\hskip.8cm\rm{$(1)$}$\sum\limits_{i=1}^ka_i\geq\sum\limits_{i=1}^kb_i$, where $k=1,2,\ldots,\min\{r, s\}$,

\hskip.8cm\rm{$(2)$}$\sum\limits_{i=1}^ra_i=\sum\limits_{i=1}^sb_i$.
\end{defn}

\begin{lem}\label{lem 2.4}{\rm(\cite{1923})}
Let $A$ be a positive semidefinite Hermitian matrix, $\lambda_1, \lambda_2, \ldots, \lambda_n$ be the eigenvalues of $A$ where $\lambda_1\geq\lambda_2\geq\cdots\geq\lambda_n$ and $a_1, a_2, \ldots, a_n$ be the entries of the main diagonal of $A$ satisfying $a_1\geq a_2\geq \cdots\geq a_n$.
Then the spectrum of $A$ majorizes its main diagonal, that is, $\sum\limits_{i=1}^k\lambda_i\geq \sum\limits_{i=1}^ka_i$ for $k=1, 2, \ldots, n$, and $\sum\limits_{i=1}^n\lambda_i=\sum\limits_{i=1}^na_i$.
\end{lem}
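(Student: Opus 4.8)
The plan is to separate the two assertions and prove the trace equality first, since it is the easy half. Because $A$ is Hermitian it is unitarily diagonalizable, and the trace is invariant under unitary similarity; hence
\[
\sum_{i=1}^n a_i \;=\; \operatorname{tr}(A) \;=\; \sum_{i=1}^n \lambda_i,
\]
which is exactly condition $(2)$ of Definition \ref{defn 2.2}. This needs no positive semidefiniteness and no ordering.

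For the partial-sum inequalities $\sum_{i=1}^k \lambda_i \geq \sum_{i=1}^k a_i$, my main route would be Ky Fan's maximum principle, which characterizes the sum of the $k$ largest eigenvalues of a Hermitian matrix as
\[
\sum_{i=1}^k \lambda_i \;=\; \max\Big\{\, \sum_{j=1}^k y_j^{*} A\, y_j \;:\; y_1, \ldots, y_k \text{ orthonormal in } \mathbb{C}^n \,\Big\}.
\]
I would then test this maximum against one specific orthonormal family, the first $k$ standard basis vectors $e_1, \ldots, e_k$. Since the diagonal entries are already arranged in nonincreasing order, $\sum_{j=1}^k e_j^{*} A\, e_j = \sum_{j=1}^k a_{jj} = \sum_{j=1}^k a_j$ is the sum of the $k$ largest diagonal entries, and the variational principle yields the inequality in a single step.

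An alternative, which sidesteps Ky Fan, uses the spectral decomposition $A = U\operatorname{diag}(\lambda_1, \ldots, \lambda_n) U^{*}$ with $U = (u_{ij})$ unitary. Writing $a_{ii} = \sum_{j=1}^n |u_{ij}|^2 \lambda_j$ exhibits the diagonal vector as the image of $(\lambda_1, \ldots, \lambda_n)$ under the doubly stochastic matrix $P = (|u_{ij}|^2)$; the desired majorization then follows from the Hardy--Littlewood--P\'olya theorem that $Px$ is majorized by $x$ whenever $P$ is doubly stochastic.

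The main obstacle in either route is supplying the underlying engine -- Ky Fan's maximum principle in the first, or the Hardy--Littlewood--P\'olya characterization in the second. Each is standard but genuinely nontrivial, and it is where the real content of the lemma lives; once it is available, the reduction to the stated inequality is a one-line substitution. I would also remark that the positive semidefinite hypothesis is never used, so the conclusion in fact holds for every Hermitian matrix and the assumption is harmless.
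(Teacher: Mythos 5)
The paper does not prove this lemma at all: it is quoted as a known result and attributed to Schur's 1923 paper, so there is no in-text argument to compare yours against. On its own merits your proposal is correct and is the standard proof of Schur's majorization theorem. The trace identity is immediate from unitary invariance, and either of your two routes for the partial sums works: the Ky Fan variational principle tested on the first $k$ standard basis vectors (using that the diagonal is already sorted, so $\sum_{j=1}^k a_{jj}$ is the sum of the $k$ largest diagonal entries), or the observation that the diagonal is the image of the eigenvalue vector under the doubly stochastic matrix $\bigl(|u_{ij}|^2\bigr)$ followed by Hardy--Littlewood--P\'olya. You are right that the heavy lifting sits in the quoted engine (Ky Fan or HLP), and you are also right that positive semidefiniteness is never used -- the majorization holds for every Hermitian matrix. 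The paper only needs the $k=1$ case ($\lambda_1 \geq a_1$, used in Theorem \ref{thm 3.4} and Theorem \ref{thm 2.5} to conclude $\lambda(B) \geq r_1$), and it is worth noting that even that special case follows from your Ky Fan step with $k=1$, i.e.\ the Rayleigh quotient evaluated at $e_1$, without any majorization machinery.
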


\begin{theo}\label{thm 3.4}
Let $A=(a_{ij})$ be an $n\times n$ nonnegative and irreducible matrix with $a_{ii}=0$ for $i=1, 2, \ldots, n$
and row sums $r_1, r_2, \ldots, r_n$ with $r_1\geq r_2\geq\ldots\geq r_n$. Let $B=A+M$, where $M=diag(r_1, r_2, \ldots, r_n)$, $s_i=\sum\limits_{j=1}^na_{ij}r_j$  for any $i\in\{1, 2, \ldots, n\}$ and $\lambda(B)$ be the spectral radius of $B$. If $B$ is a positive semidefinite Hermitian matrix, then
\vskip.2cm
\hskip0.8cm $\lambda(B)\leq \max\limits_{1\leq i,j\leq n}\left\{\frac{r_i+r_j+\sqrt{(r_i-r_j)^2+\frac{4s_is_j}{r_ir_j}}}{2}\right\}$,

\noindent with equality if and only if row sums of $A$ are all equal.
\end{theo}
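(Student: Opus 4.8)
The plan is to reduce the whole statement to a single scalar identity coming from the Perron eigenvector, and to exploit the fact that the two-index maximum in the conclusion is actually attained on the diagonal $i=j$. Throughout write $f(i,j)=\tfrac12\big(r_i+r_j+\sqrt{(r_i-r_j)^2+4s_is_j/(r_ir_j)}\big)$ for the quantity inside the braces, so the claim is $\lambda(B)\le\max_{i,j}f(i,j)$. Note first that $f(i,i)=r_i+s_i/r_i$, and that by analysing the larger root of $(t-r_i)(t-r_j)=s_is_j/(r_ir_j)$ one checks $f(i,j)\le\max\{f(i,i),f(j,j)\}$; hence $\max_{i,j}f(i,j)=\max_i\big(r_i+s_i/r_i\big)$, and it suffices to produce a single index $i_0$ with $\lambda(B)\le r_{i_0}+s_{i_0}/r_{i_0}$.

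Since $A$ is nonnegative and irreducible, so is $B=A+M$, and by Perron–Frobenius $\lambda:=\lambda(B)$ has a positive eigenvector $x=(x_1,\dots,x_n)^T>0$. Because $a_{ii}=0$, the $i$-th row of $Bx=\lambda x$ reads $(\lambda-r_i)x_i=\sum_j a_{ij}x_j$. I would multiply the $i$-th equation by $r_i$ and sum over $i$; using the symmetry $a_{ij}=a_{ji}$ (valid since $B$ is Hermitian and $M$ is real diagonal, so $A$ is real symmetric) gives $\sum_i r_ia_{ij}=\sum_i a_{ji}r_i=s_j$, whence the single clean identity
\[
\sum_{i=1}^n\big[r_i(\lambda-r_i)-s_i\big]x_i=0 .
\]
As every $x_i>0$, the bracketed terms cannot all be positive, so some index $i_0$ satisfies $r_{i_0}(\lambda-r_{i_0})\le s_{i_0}$, i.e. $\lambda\le r_{i_0}+s_{i_0}/r_{i_0}=f(i_0,i_0)\le\max_{i,j}f(i,j)$, which is the asserted inequality.

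For equality I would argue from the same identity. Put $b_i=r_i(\lambda-r_i)-s_i$. If $\lambda$ equals the bound, then $f(i,i)\le\lambda$ for every $i$, i.e. $b_i\ge 0$ for all $i$; combined with $\sum_i b_ix_i=0$ and $x_i>0$ this forces $b_i=0$ for all $i$, that is $s_i=r_i(\lambda-r_i)$ for every $i$. Equivalently $(Br)_i=r_i^2+s_i=\lambda r_i$, so $Br=\lambda r$: the row-sum vector $r$ is itself a positive eigenvector of $B$ for $\lambda(B)$. The converse is immediate: if all row sums equal $r$, then $s_i=r^2$, so $f(i,i)=2r$, and $\lambda=2r$ by Lemma \ref{lem 2.2}.

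The forward equality direction is where I expect the real difficulty. The argument only delivers ``$r$ is a Perron vector of $B$'' ($Br=\lambda r$ with $r>0$), and this does not by itself force the row sums to be constant; promoting ``$r$ is a Perron vector'' to ``$A$ has equal row sums'' is the delicate point, and is exactly the kind of step flagged in the introduction as defective for a known sharp bound (indeed, for $B=Q$ the star has non-constant degree vector as its signless Laplacian Perron vector, so care is warranted here). I would therefore prove the easy direction cleanly as above and scrutinise very carefully whether the positive semidefiniteness hypothesis actually licenses the converse, rather than assuming it does.
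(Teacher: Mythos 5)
Your proof of the inequality is correct and takes a genuinely different route from the paper's. The paper conjugates $B$ by $M$, normalizes the Perron vector of $M^{-1}BM$ so that its largest entry is $x_i=1$, lets $x_j$ be the second largest entry, derives the two bounds $\lambda(B)-r_i\le x_js_i/r_i$ and $(\lambda(B)-r_j)x_j\le s_j/r_j$, and multiplies them; positive semidefiniteness enters only through Lemma \ref{lem 2.4}, to guarantee $\lambda(B)\ge r_1$ so that both factors are nonnegative before multiplying. Your argument instead observes that the two-index maximum collapses onto the diagonal, $\max_{i,j}f(i,j)=\max_i\bigl(r_i+s_i/r_i\bigr)$ (your verification of $f(i,j)\le\max\{f(i,i),f(j,j)\}$ is sound), and then extracts a good index from the single identity $\sum_i\bigl[r_i(\lambda-r_i)-s_i\bigr]x_i=0$. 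This is cleaner, dispenses with the positive semidefiniteness hypothesis entirely for the inequality, and makes the equality analysis transparent.

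Your suspicion about the equality case is vindicated: the characterization ``equality iff the row sums of $A$ are all equal'' is false, so the step you declined to complete cannot be completed. As you show, equality forces exactly $Br=\lambda(B)r$, i.e.\ $r_i+s_i/r_i$ constant in $i$, and conversely this condition yields equality; but it is strictly weaker than constancy of the $r_i$. Concretely, take $A=A(S_3)$ and $B=Q(S_3)$: the hypotheses hold ($B$ has eigenvalues $3,1,0$, hence is positive semidefinite), $r=(2,1,1)$, $s=(2,2,2)$, every $f(i,j)$ equals $3=\lambda(B)$, yet the row sums of $A$ are not equal. More generally $q_1(S_n)=n$ attains the bound for every $n$; the paper's Example avoids this only because it miscomputes $s_j$ for the leaves as $2n-3$ instead of $n-1$. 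The paper's own proof has the same hole: its equality analysis likewise only reaches ``all entries of the Perron vector of $M^{-1}BM$ are equal,'' which is precisely $Br=\lambda(B)r$, and then asserts ``we can see easily'' that the row sums are equal. The correct statement is: equality holds iff $r$ is a Perron eigenvector of $B$, equivalently iff $r_i+s_i/r_i$ is the same for all $i$; in the setting of Corollary \ref{cor 2.6} this reads $d_i+m_i$ constant, where $m_i=s_i/d_i$ is the average degree of the neighbours of $v_i$, a class containing the star as well as all regular graphs.
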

\begin{proof}
Note that $B=(b_{ij})$, where
$b_{ij}=\left\{
\begin{array}{cc}
 r_{i}, & \mbox{if } i=j; \\[0.2mm]
 a_{ij}, & \mbox{if } i\neq j,
\end{array}
\right.$
 $M^{-1}BM$ and $B$ have the same  eigenvalue,
let $x=(x_1, x_2, \ldots, x_n)$ be a positive eigenvector of $M^{-1}BM$
corresponding to $\lambda(B)$.
We assume that one entry, say $x_i$, is equal to 1 and the others are less than or equal to 1, that is, $x_i=1$ and $0<x_k\leq1$ for any $k$. Let $x_j=\max\{x_k|1\leq k\leq n \mbox { and } k\neq i\}$. From
$M^{-1}BMx=\lambda(B)x$,
we have
\vskip.2cm
\hskip0.8cm $\lambda(B)=\lambda(B)x_i=\sum\limits_{k=1}^n\frac{b_{ik}r_kx_k}{r_i}$
$\leq \frac{1}{r_i}(r_i^2+x_j\sum\limits_{k=1}^na_{ik}r_k)=r_i+\frac{x_js_i}{r_i}$, \hskip3.7cm$(1)$
\vskip.2cm
\noindent with equality if and only if $x_k=x_j$ for $1\leq k\leq n \mbox { and } k\neq i$, and

\vskip.2cm
\hskip0.8cm $\lambda(B)x_j=\sum\limits_{k=1}^n\frac{b_{jk}r_kx_k}{r_j}$
 $=r_jx_j+\frac{1}{r_j}\sum\limits_{k=1}^na_{jk}r_kx_k$
$\leq r_jx_j+\frac{1}{r_j}\sum\limits_{k=1}^na_{jk}r_k$
$=r_jx_j+\frac{s_j}{r_j}$.\hskip1.3cm$(2)$
\vskip.2cm
\noindent with equality if and only if $x_k=x_i=1 $ for $1\leq k\leq n \mbox { and } k\neq j$.

\vskip.1cm
From $(1)$ and $(2)$, we get
\vskip.2cm
\hskip0.8cm $\lambda(B)-r_i\leq \frac{x_js_i}{r_i}$, \hskip11.7cm$(3)$

\noindent and

\hskip0.8cm $(\lambda(B)-r_j)x_j\leq \frac{s_j}{r_j}$.  \hskip11.2cm$(4)$
\vskip.1cm
Since $B$ is a positive semidefinite Hermitian matrix, then $\lambda(B)\geq r_1$ by Lemma \ref{lem 2.4}. Thus $\lambda(B)-r_i\geq0$ and $\lambda(B)-r_j\geq0$. From $(3)$ and $(4)$, we get
\vskip.2cm
\hskip0.8cm $\lambda^2(B)-(r_i+r_j)\lambda(B)+r_ir_j-\frac{s_is_j}{r_ir_j}\leq 0$.
\vskip.1cm
Thus we have
\vskip.1cm
\hskip0.8cm$\lambda(B)\leq\frac{r_i+r_j+\sqrt{(r_i-r_j)^2+\frac{4s_is_j}{r_ir_j}}}{2}$.
\vskip.2cm
Moreover, we can see easily the equality holds if and only if row sums of $A$ are all equal.
\end{proof}


In \cite{2013a}, A.D. Maden et al. obtained the result about the sharp bound of the signless Laplacian spectral radius of $G$ as follows Proposition  \ref{error1}.
There exists an error about the condition of the equality holds.

\begin{prop}\label{error1}{\rm(Theorem 6 (b)) in \cite{2013a}}
Let $G$ be a simple and connected graph on $n$ vertices with degrees sequence $\{d_1, d_2, \ldots, d_n\}$ which satisfies $d_1\geq d_2\geq\cdots\geq d_n$ and $q_1(G)$ be the signless Laplacian spectal radius of $G$. Let $s_i=\sum\limits_{v_k\sim v_i}d_k$, where vertices $v_i$ and $v_k$ are adjacent, for $i\in\{1, 2, \ldots, n\}$. Then
\vskip.2cm
\hskip0.8cm $q_1(G)\leq \max\limits_{1\leq i,j\leq n}\left\{\frac{d_i+d_j+\sqrt{(d_i-d_j)^2+\frac{4s_is_j}{d_id_j}}}{2}\right\}$,

\noindent with equality if and only if $G$ is a regular graph and a bipartite semi--regular graph.
\end{prop}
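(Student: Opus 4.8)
The plan is to treat Proposition~\ref{error1} as the specialization of Theorem~\ref{thm 3.4} to the signless Laplacian and then to devote the real effort to the equality case, which is the point of the discussion. For the inequality I would set $A=A(G)$, $M=D(G)$ and $B=A+M=Q(G)$. Here $A(G)$ is nonnegative, symmetric, has zero diagonal, and is irreducible because $G$ is connected; its $i$-th row sum is $d_i$, and $\sum_{j}a_{ij}d_j=\sum_{v_k\sim v_i}d_k=s_i$, so the quantities $s_i$ agree with those in the statement. The one hypothesis that deserves a remark is that $B=Q(G)$ be positive semidefinite, which I would record through $x^{T}Q(G)x=\sum_{v_iv_j\in E(G)}(x_i+x_j)^2\geq 0$. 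With every hypothesis in force, Theorem~\ref{thm 3.4} returns the displayed upper bound on $\lambda(B)=q_1(G)$ verbatim.

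The substance of the statement is the equality clause, and here I would not simply transcribe the generic ``row sums all equal'' condition of Theorem~\ref{thm 3.4}: taking a maximum over all pairs $(i,j)$ can drive the right-hand side down to $q_1(G)$ under hypotheses strictly weaker than regularity, so the equality analysis must be redone in graph terms. Reopening the chain (1)--(4) of Theorem~\ref{thm 3.4} for $B=Q(G)$, and writing $x$ for the Perron vector of $M^{-1}Q(G)M$ scaled so that $x_i=1$ is a largest entry and $x_j$ the largest of the remaining ones, equality forces $x_k=x_j$ for every neighbour $k$ of $v_i$ and $x_k=1$ for every neighbour $k$ of $v_j$. The quantity that controls the diagonal terms of the maximum is $d_i+s_i/d_i$, and it is its constancy — not that of the degrees — that governs when the bound is met.

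This is exactly where the quoted condition must be corrected. For a $d$-regular graph, $d_i=d$ and $s_i=d^2$, so every term of the maximum equals $2d=q_1(G)$; for a bipartite semiregular graph with part-degrees $p$ and $q$ one has $s_i=pq$ for every vertex, the vector taking the value $p$ on the degree-$p$ side and $q$ on the degree-$q$ side is a positive eigenvector of $Q(G)$ for the eigenvalue $p+q$, and Perron--Frobenius gives $q_1(G)=p+q$, which again equals every term of the maximum. Hence both regular graphs and \emph{non-regular} bipartite semiregular graphs attain equality, so the correct characterization is the disjunction ``$G$ is regular or $G$ is bipartite semiregular,'' rather than the conjunction recorded in Proposition~\ref{error1}; this is the defect.

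The step I expect to be the main obstacle is the converse: that equality forces $G$ into one of these two classes and no other. The line I would pursue is to show that equality, through the two simultaneous conditions extracted above, confines the Perron vector of $M^{-1}Q(G)M$ to at most two distinct values along the edges that matter, and then to substitute this two-valued structure back into $Q(G)y=q_1(G)y$. The eigenvalue equations at the two value-classes should then force either $x_j=1$, collapsing all classes and yielding regularity, or a genuine bipartition with each class internally of constant degree, yielding bipartite semiregularity. Keeping the interaction between the outer maximum over $(i,j)$ and these pointwise equality conditions under control — in particular excluding mixed configurations that are neither regular nor semiregular — is the delicate bookkeeping that the argument in \cite{2013a} passed over, and it is what must be supplied to repair the equality condition.
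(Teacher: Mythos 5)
Your derivation of the inequality is the paper's own route: specialize Theorem~\ref{thm 3.4} to $A=A(G)$, $M=D(G)$, $B=Q(G)$, checking positive semidefiniteness of $Q(G)$. Where you diverge is the equality case, and there you and the paper reach \emph{opposite} conclusions, so this must be said plainly. The paper's Example asserts that for the star $S_n$ one has $s_2=\cdots=s_n=2n-3$, whence the right-hand side is $2n-2$ while $q_1(S_n)=n$, and on that basis Corollary~\ref{cor 2.6} declares equality to hold iff $G$ is regular. But a leaf of $S_n$ has a single neighbour, the centre, of degree $n-1$, so $s_2=n-1$, not $2n-3$; with the correct value every term of the maximum equals $n=q_1(S_n)$, and the star attains equality. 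Your general verification for a $(p,q)$-semiregular bipartite graph --- $s_i=pq$ for all $i$, every term of the maximum equal to $p+q$, and the vector constant on each side a positive $Q$-eigenvector for $p+q$, so $q_1=p+q$ by Perron--Frobenius --- is correct and shows the same thing. So your disjunction ``regular or bipartite semiregular'' is the right target; the paper's Example rests on a miscomputation, Corollary~\ref{cor 2.6} is itself false, and the ``only if'' clause of Theorem~\ref{thm 3.4} (row sums of $A$ all equal) already fails for $B=Q(S_n)$: the chain of inequalities really pins down constancy of $r_i+s_i/r_i$, not of $r_i$, and the outer maximum over pairs $(i,j)$ can coincide with $\lambda(B)$ without the row sums of $A$ being constant.

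That said, your proposal is not yet a proof of the corrected statement. The converse --- that equality forces $G$ to be regular or bipartite semiregular --- is only announced as ``the main obstacle'' together with a plan; the two-valued structure of the Perron vector of $D(G)^{-1}Q(G)D(G)$ and the substitution back into $Q(G)y=q_1(G)y$ are not carried out, and that is exactly where the bookkeeping is delicate: equality in steps $(1)$ and $(2)$ of Theorem~\ref{thm 3.4} constrains $x_k$ only at indices where $a_{ik}$, respectively $a_{jk}$, is nonzero, i.e.\ only at neighbours of $v_i$ and $v_j$, so ruling out mixed configurations requires an actual argument. As it stands you have a sound refutation of the conjunction in Proposition~\ref{error1} and of the paper's own ``correction,'' plus a verified ``if'' direction, but not the full equivalence.
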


\begin{exam}
The star graph $S_n$ is a bipartite semi--regular with degree sequence $\{n-1, 1, 1, \ldots, 1\}$ and $s_1=n-1$, $s_2=s_3=\cdots=s_n=2n-3$. By Proposition  \ref{error1}, we have  $q_1(S_n)=\max\{\frac{n+\sqrt{n^2+4n-8}}{2}, 2n-2\}=2n-2$. In fact, we know $q_1(S_n)=n$ for directly calculation. It is a contradiction.
\end{exam}
 From the proof of Theorem \ref{thm 3.4}, we can see the equality holds if and only if $G$ is a regular graph. We correct the error Proposition  \ref{error1} in \cite{2013a} as the following corollary.

\begin{cor}\label{cor 2.6}
Let $G$ be a simple and connected graph on $n$ vertices with degrees sequence $\{d_1, d_2, \ldots, d_n\}$ which satisfies $d_1\geq d_2\geq\cdots\geq d_n$ and $q_1(G)$ be the signless Laplacian spectal radius of $G$. Let $s_i=\sum\limits_{v_k\sim v_i}d_k$, where vertices $v_i$ and $v_k$ are adjacent, for $i\in\{1, 2, \ldots, n\}$. Then
\vskip.2cm
\hskip0.8cm $q_1(G)\leq \max\limits_{1\leq i,j\leq n}\left\{\frac{d_i+d_j+\sqrt{(d_i-d_j)^2+\frac{4s_is_j}{d_id_j}}}{2}\right\}$,

\noindent with equality if and only if $G$ is a regular graph.
\end{cor}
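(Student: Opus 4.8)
The plan is to obtain this corollary as a direct specialization of Theorem \ref{thm 3.4}, taking the adjacency matrix in the role of $A$. First I would set $A=A(G)$, the $(0,1)$--adjacency matrix of $G$. Since $G$ is simple there are no loops, so $a_{ii}=0$ for every $i$; since $G$ is connected, $A(G)$ is irreducible; and $A(G)$ is plainly nonnegative and symmetric. The $i$-th row sum of $A(G)$ is exactly the degree $d_i$, so the hypothesis $r_1\geq r_2\geq\cdots\geq r_n$ of Theorem \ref{thm 3.4} becomes the assumed ordering $d_1\geq d_2\geq\cdots\geq d_n$. With $M=\mathrm{diag}(d_1,\ldots,d_n)=D(G)$, the matrix $B=A+M$ is precisely the signless Laplacian $Q(G)=D(G)+A(G)$, whose spectral radius is $q_1(G)=\lambda(B)$. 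Moreover the quantity $s_i=\sum_{j=1}^n a_{ij}r_j$ appearing in Theorem \ref{thm 3.4} becomes $\sum_{v_k\sim v_i}d_k$, which matches the definition of $s_i$ in the statement.

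The one hypothesis of Theorem \ref{thm 3.4} not immediate from the combinatorial setup is that $B=Q(G)$ be positive semidefinite Hermitian. I would verify this by the standard edge decomposition: for any real vector $x=(x_1,\ldots,x_n)^T$,
\[
x^T Q(G)\, x=\sum_{v_iv_j\in E(G)}(x_i+x_j)^2\geq 0,
\]
and $Q(G)$ is real symmetric, hence Hermitian. Once all hypotheses of Theorem \ref{thm 3.4} are checked, the displayed upper bound on $\lambda(B)=q_1(G)$ follows at once with no further computation.

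For the equality case, Theorem \ref{thm 3.4} asserts that equality holds if and only if the row sums of $A$ are all equal. Translating back, the row sums of $A(G)$ are the degrees $d_i$, so equality holds exactly when $d_1=d_2=\cdots=d_n$, that is, exactly when $G$ is regular. This is the corrected equality condition: the bipartite semiregular case claimed in Proposition \ref{error1} does not in general yield equality, as the star graph $S_n$ in the preceding example shows.

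I do not anticipate a genuine obstacle, since the analytic work is already packaged inside Theorem \ref{thm 3.4}; the only point requiring care is the positive semidefiniteness of $Q(G)$, which is what secures the inequality $\lambda(B)\geq r_1$ (via Lemma \ref{lem 2.4}) used to clear denominators in the derivation of Theorem \ref{thm 3.4} without reversing the inequality. Everything else is a transcription of the hypotheses and conclusion into the language of degrees and neighbor-degree sums.
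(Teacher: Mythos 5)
Your proposal is correct and follows exactly the route the paper intends: specializing Theorem \ref{thm 3.4} to $A=A(G)$, $M=D(G)$, $B=Q(G)$, with the positive semidefiniteness of $Q(G)$ verified by the edge decomposition $x^TQ(G)x=\sum_{v_iv_j\in E(G)}(x_i+x_j)^2$. The paper leaves this specialization implicit (it only remarks that the equality condition from the proof of Theorem \ref{thm 3.4} gives regularity), so your explicit check of the semidefiniteness hypothesis is a welcome addition rather than a deviation.
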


\begin{theo}\label{thm 3.5}
Let $A=(a_{ij})$ be an $n\times n$ nonnegative matrix with row sums
$r_1, r_2, \ldots, r_n$, where $r_1\geq r_2\geq\ldots\geq r_n$, $M$ be the largest diagonal element, $N$ be the largest
 non-diagonal element and $\lambda(A)$ be the spectral radius of $A$. Then for $i=1, 2, \ldots, n$,
\vskip.2cm
\hskip0.8cm $\lambda(A)\leq \frac{r_i+M-N+\sqrt{(r_i+N-M)^2+4(i-1)(r_1-r_i)N}}{2}$.  \hskip7cm$(5)$

\vskip.2cm
 Moreover, if $A$ is irreducible,
when $i=1$, then the equality holds if and only if the row sums of $A$ are all equal;
 when $2\leq i\leq n$, then the equality holds if and only if $A$ satisfies the following conditions:
\vskip.1cm
$\rm{(i)}$ $a_{ll}=M$ for $1\leq l\leq i-1$,
\vskip.1cm
$\rm{(ii)}$ $a_{lk}=N$ for $1\leq l, k\leq n$ and $l\neq k$,
\vskip.1cm
$\rm{(iii)}$ $r_1=r_2=\cdots=r_{i-1}\geq r_i=r_{i+1}=\cdots=r_n$.
\end{theo}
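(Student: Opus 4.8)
The plan is to reduce the displayed bound $(5)$ to a single quadratic inequality in $\lambda:=\lambda(A)$ and then solve it. Squaring shows that the claimed right–hand side is exactly the larger root of
\[
\lambda^2-(r_i+M-N)\lambda+r_i(M-N)-(i-1)(r_1-r_i)N=0,
\]
and since $\lambda^2-(r_i+M-N)\lambda+r_i(M-N)=(\lambda-r_i)(\lambda-M+N)$, proving $(5)$ is equivalent to proving the key inequality
\[
(\lambda-r_i)(\lambda-M+N)\le (i-1)(r_1-r_i)N. \qquad (\ast)
\]
Let $x=(x_1,\dots,x_n)^T$ be a nonnegative Perron eigenvector of $A$ (positive when $A$ is irreducible), normalized so that $\max_k x_k=x_p=1$. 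Because the spectral radius of a nonnegative matrix dominates every diagonal entry, $\lambda\ge M$, so $\lambda-M+N\ge N\ge 0$; and if $\lambda\le r_i$ the left side of $(\ast)$ is $\le 0$ while the right side is $\ge 0$, so $(\ast)$ is automatic. Hence I may assume $\lambda>r_i$ and that the case $i=1$ (where $(5)$ reads $\lambda\le r_1$, i.e. Lemma \ref{lem 2.2}) is treated separately.

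The next step is to extract two estimates from the eigen-equations $\lambda x_k=\sum_j a_{kj}x_j$. From the row of the maximal component $p$, using $a_{pp}\le M$ and $a_{pk}\le N$,
\[
\lambda=a_{pp}+\sum_{k\ne p}a_{pk}x_k\le M+N\sum_{k\ne p}x_k,
\]
which rearranges to $\lambda-M+N\le NS$, where $S=\sum_k x_k$. Secondly, summing all $n$ eigen-equations gives $\lambda S=\sum_{k,j}a_{kj}x_j$, and comparing with $\sum_k r_k x_k=\sum_{k,j}a_{kj}x_k$ yields (provided the quantity $\sum_{k,j}a_{kj}(x_k-x_j)$ is nonnegative) the inequality $\lambda S\le\sum_k r_k x_k$. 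Subtracting $r_iS$ and using $x_k\le 1$ together with the fact that at most $i-1$ indices satisfy $r_k>r_i$, each contributing at most $r_1-r_i$ while the indices with $r_k\le r_i$ contribute $\le 0$, gives $(\lambda-r_i)S\le (i-1)(r_1-r_i)$. Since both $\lambda-r_i>0$ and $\lambda-M+N\ge 0$, I may multiply the two estimates:
\[
(\lambda-r_i)(\lambda-M+N)\le (\lambda-r_i)NS=N\,(\lambda-r_i)S\le (i-1)(r_1-r_i)N,
\]
which is exactly $(\ast)$; solving the quadratic then delivers $(5)$.

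For the equality analysis I would take $A$ irreducible, so $x>0$, and trace equality back through the chain. Tightness of the first estimate forces $a_{pp}=M$ and $a_{pk}=N$ for all $k\ne p$; attainment of the weighted count $\sum_k(r_k-r_i)x_k=(i-1)(r_1-r_i)$, since $x>0$, forces exactly $i-1$ indices with $r_k=r_1$ and $x_k=1$ and all remaining row sums equal to $r_i$, giving condition (iii); and propagating the first estimate to each of these "large" vertices forces $a_{ll}=M$ for $1\le l\le i-1$ and $a_{lk}=N$ for all $l\ne k$, giving (i) and (ii). The step I expect to be the main obstacle is the second estimate, namely the passage $\lambda S\le\sum_k r_k x_k$, which unwinds to $\sum_{k,j}a_{kj}(x_k-x_j)\ge 0$: this holds with equality when $A$ is symmetric (covering the intended applications to the signless Laplacian and the distance signless Laplacian), but its sign is the delicate point in the general nonnegative case, and the combinatorial bound $\sum_k(r_k-r_i)x_k\le (i-1)(r_1-r_i)$ must be organized around the ordering $r_1\ge\cdots\ge r_n$ of the \emph{row sums} rather than the ordering of the eigenvector entries.
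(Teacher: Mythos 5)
Your reduction of $(5)$ to the quadratic $(\lambda-r_i)(\lambda-M+N)\le (i-1)(r_1-r_i)N$ is algebraically correct, and your first estimate $\lambda-M+N\le NS$ from the maximal Perron component is fine. The genuine gap is exactly the step you flag yourself: the inequality $\lambda S\le\sum_k r_k x_k$, i.e.\ $\sum_{k,j}a_{kj}(x_k-x_j)\ge 0$, where $x$ is the \emph{right} Perron vector. Summing the eigen-equations gives $\lambda S=\sum_j c_j x_j$ with $c_j$ the \emph{column} sums, so you are asserting that the $x$-weighted average of column sums is dominated by the $x$-weighted average of row sums; this is false for general nonnegative matrices. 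For instance, $A=\begin{pmatrix}0&1\\ \varepsilon&2\end{pmatrix}$ with small $\varepsilon>0$ is nonnegative and irreducible, its Perron vector satisfies $x_1<x_2$ while $a_{12}>a_{21}$, so $\sum_{k,j}a_{kj}(x_k-x_j)=(a_{12}-a_{21})(x_1-x_2)<0$ and $\lambda S>\sum_k r_k x_k$. Hence your argument proves $(5)$ only for symmetric $A$ (which does cover the paper's applications to $Q(G)$ and $\mathcal{D}^Q$), not the theorem as stated, and the equality analysis inherits the same restriction.

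The paper avoids this entirely with a different mechanism: it forms the diagonal similarity $B=U^{-1}AU$ with $U=\mathrm{diag}(x,\dots,x,1,\dots,1)$ ($x>1$ in the first $i-1$ slots, chosen explicitly so that the resulting upper bounds on the first $i-1$ row sums of $B$ and on the last $n-i+1$ row sums of $B$ coincide at the claimed value), and then applies the elementary row-sum bound $\lambda(B)\le\max_l s_l$ of Lemma \ref{lem 2.2}. That route never touches the left/right Perron asymmetry, works for arbitrary nonnegative matrices, and makes the equality conditions (i)--(iii) fall out directly from when each row-sum estimate is tight. If you want to salvage your approach in full generality, you would have to replace the summed eigen-equation step by an argument using the left Perron vector (for which $\lambda=\sum_k y_k r_k/\sum_k y_k$ holds exactly) or restrict the statement to symmetric matrices.
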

\begin{proof}
When $i=1$ or $r_1=r_i$, it is clearly that the inequality is true and the equality holds if and only if the row sums of $A$ are all equal by Lemma \ref{lem 2.2}.
\vskip.1cm
We suppose $r_1>r_i$ and $2\leq i \leq n$.
\vskip.1cm
Let $x=\frac{M-r_i+(2i-3)N+\sqrt{(r_i+N-M)^2+4(i-1)(r_1-r_i)N}}{2(i-1)N}$. It is clear that $x>1$.

Let
$x_j=\left\{
\begin{array}{cc}
  x, & 1\leq j\leq i-1; \\
  1, & i\leq j\leq n.
\end{array}
\right.$
Let $U=diag(x_1, x_2, \ldots, x_n)$ be a diagonal matrix of size $n\times n$. Let $B=U^{-1}AU$ and note that $B$ and $A$ have the same
 eigenvalues, then $\lambda(B)=\lambda(A)$.

 Now we consider the row sums $s_1,s_2, \ldots, s_n$ of matrix $B$.

For $1\leq l\leq i-1$, we have
\vskip.2cm
\hskip0.8cm $s_l=\sum\limits_{j=1}^{n}\frac{x_j}{x_l}a_{lj}$
$=\sum\limits_{j=1}^{i-1}a_{lj}+\frac{1}{x}\sum\limits_{j=i}^{n}a_{lj}$
$=\frac{1}{x}\sum\limits_{j=1}^{n}a_{lj}+(1-\frac{1}{x})\sum\limits_{j=1}^{i-1}a_{lj}$
$=\frac{1}{x}r_l+(1-\frac{1}{x})\sum\limits_{j=1}^{i-1}a_{lj}$,

\noindent and for $i\leq l\leq n$, we have
\vskip.2cm
\hskip0.8cm $s_l=\sum\limits_{j=1}^{n}\frac{x_j}{x_l}a_{lj}$
$=\sum\limits_{j=1}^{n}a_{lj}+(x-1)\sum\limits_{j=1}^{i-1}a_{lj}$
$=r_l+(x-1)\sum\limits_{j=1}^{i-1}a_{lj}$.

Since $A$ is a nonnegative matrix, then $0\leq a_{ij}\leq N$ if $i\neq j$, where $1\leq i,j\leq n$. Thus
\vskip.2cm
\hskip0.8cm $\sum\limits_{j=1}^{i-1}a_{lj}\leq M+(i-2)N$, where $1\leq l\leq i-1$,

\noindent and

\hskip0.8cm $\sum\limits_{j=1}^{i-1}a_{lj}\leq (i-1)N$, where $i\leq l\leq n$.
\vskip.1cm
As $x>1$ and $r_1\geq r_2\geq\cdots\geq r_n$, then for $1\leq l\leq i-1$,
\vskip.2cm
\hskip.8cm $s_l\leq\frac{1}{x}r_l+(1-\frac{1}{x})[M+(i-2)N]$
\vskip.2cm
\hskip1.2cm $=\frac{1}{x}(r_l-M)+M+(1-\frac{1}{x})(i-2)N$
\vskip.2cm
\hskip1.2cm $\leq\frac{1}{x}(r_1-M)+M+(1-\frac{1}{x})(i-2)N$,
\vskip.2cm
\noindent with equality if and only if $(a)$ and $(b)$ hold: $(a)$ $a_{ll}=M$ and $a_{lj}=N$ if $1\leq j \leq i-1$ with $j\neq i$, $(b)$ $r_1=r_l$.
\vskip.2cm
 For $i\leq l\leq n$, we have
\vskip.2cm
\hskip.8cm $s_l\leq r_l+(x-1)(i-1)N$
$\leq r_i+(x-1)(i-1)N$,
\vskip.2cm
\noindent with equality if and only if $(c)$ and $(d)$ hold: $(c)$ $a_{lj}=N$ if $1\leq j \leq i-1$, $(d)$ $r_i=r_l$.

Note that
\vskip.2cm
\hskip0.8cm
$\frac{1}{x}(r_1-M)+M+(1-\frac{1}{x})(i-2)N=r_i+(x-1)(i-1)N=\frac{M+r_i-N+\sqrt{(r_i+N-M)^2+4(i-1)(r_1-r_i)N}}{2}$,
thus
\vskip.2cm
\hskip0.8cm $\lambda(A)=\lambda(B)\leq \max\{s_1, s_2, \ldots, s_n\}\leq\frac{M+r_i-N+\sqrt{(r_i+N-M)^2+4(i-1)(r_1-r_i)N}}{2}$.
\vskip.2cm
When equality holds in $(5)$, then $(a)$ $(b)$ hold for $1\leq l\leq i-1$,
$(c)$ $(d)$ hold for $i\leq l\leq n$. Thus $a_{ll}=M$ for $1\leq l\leq i-1$, $r_1=r_2=\cdots=r_{i-1}>r_i=r_{i+1}=\cdots=r_n$ and all the  non-diagonal elements are equal. Now $\rm{(i)}-\rm{(iii)}$ follow.

Conversely, if $\rm{(i)}-\rm{(iii)}$ hold, it is easy to show that equality holds.
\end{proof}

Similarly, we apply Theorem \ref{thm 3.5} to $Q(G)=D(G)+A(G)$ and note that $r_i=2d_i$ for any $i\in\{1, 2, \ldots, n\}$, $M=d_1$ and $N=1$.
Then we have 

\begin{cor}\label{cor 2.9}{\rm(\cite{2011}, Theorem 3.2)}
Let $G$ be a simple and connected graph with $n$ vertices and degrees sequence $\{d_1, d_2, \ldots, d_n\}$ which satisfies $d_1\geq d_2\geq\cdots\geq d_n$ and $q_1(G)$ be the signless Laplacian spectal radius of $G$. Then for $i=1, 2, \ldots, n$,
\vskip.2cm
\hskip0.8cm $q_1(G)\leq \frac{2d_i+d_1-1+\sqrt{(2d_i+1-d_1)^2+8(i-1)(d_1-d_i)}}{2}$.
\vskip.2cm
Moreover, if $i=1$, then the equality holds if and only if $G$ is a regular graph; if $2\leq i\leq n$, then the equality holds if and only if $G$ is either a regular graph or a bidegreed graph in which $d_1=d_2=\cdots=d_{i-1}=n-1$ and $d_i=d_{i+1}=\cdots=d_n$.
\end{cor}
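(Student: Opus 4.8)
The plan is to apply Theorem \ref{thm 3.5} directly to the matrix $A = Q(G) = D(G) + A(G)$ and then translate the abstract equality conditions into graph-theoretic language. First I would check the hypotheses: since $G$ is connected, $Q(G)$ is nonnegative and irreducible. Its diagonal entries are the degrees $d_1, \ldots, d_n$, so the largest diagonal element is $M = d_1$; its off-diagonal entries are the adjacency values $a_{ij} \in \{0,1\}$, and connectedness guarantees at least one edge, so the largest non-diagonal element is $N = 1$. Finally, the $i$th row sum of $Q(G)$ equals $d_i + \sum_{j} a_{ij} = d_i + d_i = 2d_i$, hence $r_i = 2d_i$, and the ordering $r_1 \geq \cdots \geq r_n$ is precisely $d_1 \geq \cdots \geq d_n$.

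Next I would substitute these values into inequality $(5)$. The numerator $r_i + M - N$ becomes $2d_i + d_1 - 1$; inside the radical $(r_i + N - M)^2 = (2d_i + 1 - d_1)^2$ and $4(i-1)(r_1 - r_i)N = 4(i-1)(2d_1 - 2d_i) = 8(i-1)(d_1 - d_i)$. This yields exactly $q_1(G) \leq \frac{2d_i + d_1 - 1 + \sqrt{(2d_i + 1 - d_1)^2 + 8(i-1)(d_1 - d_i)}}{2}$, the claimed bound, for every $i$. For the equality analysis I would invoke the equality characterization of Theorem \ref{thm 3.5}. When $i = 1$, equality holds iff the row sums of $Q(G)$ are all equal, i.e. $2d_1 = \cdots = 2d_n$, which is regularity. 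When $2 \leq i \leq n$, I would unwind conditions (i)--(iii): condition (iii) becomes $d_1 = \cdots = d_{i-1} \geq d_i = \cdots = d_n$, where equality throughout is again the regular graph, while the strict case $d_1 > d_i$ combined with condition (i) ($a_{ll} = M = d_1$) fixes the first $i-1$ vertices at the maximum degree and the off-diagonal condition forces each of them to be adjacent to every other vertex, so their degree is $n-1$, giving $d_1 = \cdots = d_{i-1} = n-1 > d_i = \cdots = d_n$.

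I expect the main obstacle to be the careful reading of the off-diagonal equality condition. Taken verbatim, condition (ii) of Theorem \ref{thm 3.5} demands that \emph{every} off-diagonal entry equal $N = 1$, which would force $G = K_n$ and leave no room for a genuinely bidegreed extremal graph. The resolution is that the proof of Theorem \ref{thm 3.5} only actually uses $a_{lj} = N$ for $j \in \{1, \ldots, i-1\}$ (the entries appearing in the row-sum estimates $s_l$), that is, it only requires the first $i-1$ vertices to be universal; it is this restricted condition that correctly produces $d_1 = \cdots = d_{i-1} = n-1$ without collapsing the graph to the complete one. Making this reading explicit, and verifying that the translated conditions are both necessary and sufficient for equality, is the delicate point; the remaining algebraic simplifications are routine.
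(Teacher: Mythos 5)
Your proposal matches the paper's derivation exactly: the paper's entire ``proof'' of this corollary is the one-sentence remark that one applies Theorem \ref{thm 3.5} to $Q(G)=D(G)+A(G)$ with $M=d_1$, $N=1$ and $r_i=2d_i$, which is precisely your first two paragraphs. Your third paragraph, however, supplies something the paper omits and gets subtly wrong: condition (ii) of Theorem \ref{thm 3.5} as literally stated ($a_{lk}=N$ for \emph{all} $l\neq k$) would force $G\cong K_n$ and leave no room for the bidegreed extremal graphs the corollary asserts, and your observation that the proof of Theorem \ref{thm 3.5} only ever uses $a_{lj}=N$ for $j\in\{1,\ldots,i-1\}$ is exactly the restricted reading needed to make the equality characterization ($d_1=\cdots=d_{i-1}=n-1$, $d_i=\cdots=d_n$) come out correctly. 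That discrepancy is a defect in the statement of Theorem \ref{thm 3.5} (and it propagates to the $K_n$ claim in Theorem \ref{thm 3.8}), not a gap in your argument.
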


\section{Bounds on distance signless Laplacian spectral radius}
\hskip.6cm
In this section, we present some sharp bounds on the distance signless Laplacian spectral radius by the above theorems and lemmas in Section 2.

From Lemma \ref{lem 2.2}, we can get the following result.
\begin{theo}\label{thm 2.4}
Let $G$ be a simple connected graph with $n$ vertices and $\mathcal{D}_1$ and $\mathcal{D}_n$ be the largest and least transmissions of $G$, respectively. Then
$\mathcal{D}_n\leq \delta_1(G)\leq \mathcal{D}_1$.
 Moreover, one of the equalities holds if and only if $G$ is a transmission regular graph.
\end{theo}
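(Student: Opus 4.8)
The plan is to recognize that Theorem \ref{thm 2.4} is nothing more than a direct application of Lemma \ref{lem 2.2} to the distance matrix $\mathcal{D}(G)$ itself. The crucial observation I would start from is that the $i$-th row sum of $\mathcal{D}(G)=(d_{ij})$ equals $\sum_{j=1}^n d_{ij}=\mathcal{D}_i$, the transmission of $v_i$. Hence the row sums of $\mathcal{D}(G)$ are exactly $\mathcal{D}_1, \mathcal{D}_2, \ldots, \mathcal{D}_n$, and under the prescribed ordering $\mathcal{D}_1\geq\mathcal{D}_2\geq\cdots\geq\mathcal{D}_n$ we have $\max_{1\leq i\leq n} r_i=\mathcal{D}_1$ and $\min_{1\leq i\leq n} r_i=\mathcal{D}_n$.

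Next I would verify the hypotheses of Lemma \ref{lem 2.2}. Since $G$ is connected, $\mathcal{D}(G)$ is nonnegative, symmetric and irreducible, as already recorded in the introduction. Because $\mathcal{D}(G)$ is a nonnegative irreducible symmetric matrix, the Perron--Frobenius theorem guarantees that its spectral radius coincides with its largest eigenvalue $\delta_1(G)$; thus $\lambda(\mathcal{D}(G))=\delta_1(G)$, which is what lets me transport the conclusion of the lemma into the notation of the theorem.

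Applying Lemma \ref{lem 2.2} to $A=\mathcal{D}(G)$ then yields immediately $\min_{i} r_i\leq\lambda(\mathcal{D}(G))\leq\max_{i} r_i$, that is, $\mathcal{D}_n\leq\delta_1(G)\leq\mathcal{D}_1$, which is the desired inequality. For the equality statement, since $\mathcal{D}(G)$ is irreducible, the final clause of Lemma \ref{lem 2.2} says that either of the two equalities holds if and only if all row sums of $\mathcal{D}(G)$ are equal. Translating back, equal row sums means $\mathcal{D}_1=\mathcal{D}_2=\cdots=\mathcal{D}_n$, which is precisely the definition of $G$ being a transmission regular graph, completing the argument.

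I do not expect any real obstacle here; the result is essentially a reinterpretation of Lemma \ref{lem 2.2}. The only point meriting care is the equality characterization, which hinges on the irreducibility of $\mathcal{D}(G)$ (needed to invoke the ``if and only if'' part of the lemma), and this is furnished by the connectedness of $G$.
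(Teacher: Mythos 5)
Your proof is correct and matches the paper's approach exactly: the paper gives no written-out proof, merely remarking that the result follows from Lemma \ref{lem 2.2} applied to the distance matrix, which is precisely the argument you spell out (row sums of $\mathcal{D}(G)$ are the transmissions, irreducibility comes from connectedness, and the equality case is the lemma's equal-row-sums condition).
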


Note that the $i$th row sum of $\mathcal{D}^Q$ is
$r_i=\mathcal{D}_i+\sum\limits_{j=1}^nd_{ij}=2\mathcal{D}_i$,
we get the following theorem.
\begin{theo}\label{thm 2.3}
Let $G$ be a simple connected graph with $n$ vertices, $\mathcal{D}_1$ and $\mathcal{D}_n$ be the largest and least transmissions of $G$, respectively. Then
$2\mathcal{D}_n\leq \delta_1^Q(G)\leq 2\mathcal{D}_1$.
Moreover, one of the equalities holds if and only if $G$ is a transmission regular graph.
\end{theo}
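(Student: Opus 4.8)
The plan is to recognize this statement as a direct application of Lemma \ref{lem 2.2} to the matrix $\mathcal{D}^Q$, exactly parallel to how Theorem \ref{thm 2.4} follows for $\mathcal{D}(G)$. First I would check that $\mathcal{D}^Q = Tr(G) + \mathcal{D}(G)$ meets the hypotheses of Lemma \ref{lem 2.2}: it is nonnegative (both summands have nonnegative entries), symmetric, and, because $G$ is connected, irreducible, as already recorded in the introduction. Since $\mathcal{D}^Q$ is nonnegative and irreducible, Perron--Frobenius guarantees that its spectral radius coincides with its largest eigenvalue $\delta_1^Q(G)$, so Lemma \ref{lem 2.2} applies with $\lambda(\mathcal{D}^Q) = \delta_1^Q(G)$.

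The second step is the row-sum computation, which is supplied in the remark preceding the statement: the $i$th row of $\mathcal{D}^Q$ contributes $\mathcal{D}_i$ from the diagonal of $Tr(G)$ and $\sum_{j=1}^n d_{ij} = \mathcal{D}_i$ from the $i$th row of $\mathcal{D}(G)$, so $r_i = 2\mathcal{D}_i$. Using the ordering $\mathcal{D}_1 \geq \mathcal{D}_2 \geq \cdots \geq \mathcal{D}_n$, I would identify $\max_{1\leq i\leq n} r_i = 2\mathcal{D}_1$ and $\min_{1\leq i\leq n} r_i = 2\mathcal{D}_n$. Substituting these into the conclusion of Lemma \ref{lem 2.2} yields the double inequality $2\mathcal{D}_n \leq \delta_1^Q(G) \leq 2\mathcal{D}_1$ at once.

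For the equality characterization I would invoke the second half of Lemma \ref{lem 2.2}: because $\mathcal{D}^Q$ is irreducible, one of the two bounds is attained if and only if all row sums of $\mathcal{D}^Q$ are equal. Equal row sums amount to $2\mathcal{D}_1 = 2\mathcal{D}_2 = \cdots = 2\mathcal{D}_n$, i.e.\ $\mathcal{D}_1 = \mathcal{D}_2 = \cdots = \mathcal{D}_n$, which is precisely the definition of a transmission regular graph. This closes the equality case in both directions simultaneously.

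There is no genuine obstacle here; the entire result is a corollary of Lemma \ref{lem 2.2} once the identity $r_i = 2\mathcal{D}_i$ is in hand. The only point that requires a moment of care is ensuring irreducibility of $\mathcal{D}^Q$, since it is needed to promote the inequalities to an equality characterization, but this is immediate from the connectedness of $G$. Thus the proof is short and fully mechanical, mirroring the argument already used for Theorem \ref{thm 2.4}.
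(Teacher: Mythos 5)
Your proposal is correct and is exactly the argument the paper intends: it derives the theorem from Lemma \ref{lem 2.2} via the row-sum identity $r_i = 2\mathcal{D}_i$ noted immediately before the statement, with irreducibility of $\mathcal{D}^Q$ (from connectedness of $G$) giving the equality characterization. No differences worth noting.
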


By Theorem \ref{thm 2.3}, we show the largest distance signless Laplacian spectral radius of $K_n$, $K_{\frac{n}{2}, \frac{n}{2}}$ and $C_n$ as follows.

$\delta_1^Q(K_n)=2(n-1)$, $\delta_1^Q(K_{\frac{n}{2}, \frac{n}{2}})=3n-4$ and
$\delta_1^Q(C_n)=\left\{
\begin{array}{cc}
  \frac{n^2-1}{2}, & \mbox{if } n \mbox{ is } \mbox {odd}, \\[0.2cm]
  \frac{n^2}{2}, & \mbox{if } n \mbox{ is } \mbox {even}.
\end{array}
\right.$

\begin{lem}\label{lem 2.3}{\rm(\cite{2013})}
Let $G$ be a connected graph on $n\geq2$. Then $\delta_1^Q(G)\geq\delta_1^Q(K_n)=2(n-1)$
\vskip.1cm
\noindent and $\delta_i^Q(G)\geq\delta_i^Q(K_n)=n-2$, for all $2\leq i\leq n$. Moreover, $\delta_2^Q(G)\geq\delta_2^Q(K_n)=n-2$ if and only if $G$ is the complete graph $K_n$.
\end{lem}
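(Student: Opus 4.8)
The plan is to compare $\mathcal{D}^Q(G)$ with $\mathcal{D}^Q(K_n)$ entrywise and then exploit positive semidefiniteness. First I would record that since every pair of distinct vertices of $K_n$ lies at distance $1$, one has $\mathcal{D}(K_n)=J-I$ and $Tr(K_n)=(n-1)I$, so that $\mathcal{D}^Q(K_n)=(n-2)I+J$, where $J=\mathbf{1}\mathbf{1}^{T}$ is the all-ones matrix. As $J$ has eigenvalues $n$ (once) and $0$ (with multiplicity $n-1$), the spectrum of $\mathcal{D}^Q(K_n)$ is $2(n-1)$ once and $n-2$ with multiplicity $n-1$; this pins down the asserted values $\delta_1^Q(K_n)=2(n-1)$ and $\delta_i^Q(K_n)=n-2$ for $2\le i\le n$.

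Next I would write, for an arbitrary connected $G$,
$$\mathcal{D}^Q(G)=(n-2)I+J+P,\qquad P:=\mathcal{D}^Q(G)-\mathcal{D}^Q(K_n).$$
The off-diagonal entries of $P$ are $d_{ij}-1\ge 0$, and its diagonal entries are $\mathcal{D}_i-(n-1)=\sum_{j\ne i}(d_{ij}-1)$, so $P$ is exactly the distance signless Laplacian of the complete graph weighted by $w_{ij}=d_{ij}-1\ge 0$. The structural fact I would use is that such a matrix is positive semidefinite, since for every $x$
$$x^{T}Px=\sum_{1\le i<j\le n}(d_{ij}-1)(x_i+x_j)^2\ge 0.$$
Because $J\succeq 0$ and $P\succeq 0$, this yields $\mathcal{D}^Q(G)\succeq (n-2)I$, whence $\delta_i^Q(G)\ge n-2$ for every $i$, in particular for $2\le i\le n$. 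For the largest eigenvalue I would evaluate the Rayleigh quotient at $\mathbf{1}$ (or invoke Theorem \ref{thm 2.3} together with $\mathcal{D}_n\ge n-1$): since $\mathbf{1}^{T}\mathcal{D}^Q(G)\mathbf{1}=2\sum_i\mathcal{D}_i\ge 2n(n-1)$, dividing by $\|\mathbf{1}\|^{2}=n$ gives $\delta_1^Q(G)\ge 2(n-1)$.

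The part needing the most care is the equality characterization. Setting $\mu_i:=\delta_i^Q(G)-(n-2)$, the $\mu_i$ are precisely the eigenvalues of the positive semidefinite matrix $J+P$, so $\delta_2^Q(G)=n-2$ is equivalent to $\mu_2=0$, i.e.\ to $J+P$ having at most one positive eigenvalue, i.e.\ $\operatorname{rank}(J+P)\le 1$. I would then argue that this forces $P=0$: if $J+P=vv^{T}$ is positive semidefinite of rank one, then $P=vv^{T}-\mathbf{1}\mathbf{1}^{T}\succeq 0$, but a difference $vv^{T}-\mathbf{1}\mathbf{1}^{T}$ can be positive semidefinite only when $v$ is a scalar multiple of $\mathbf{1}$ (otherwise a nonzero vector orthogonal to $v$ inside $\operatorname{span}\{v,\mathbf{1}\}$ witnesses a negative value). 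Writing $v=\lambda\mathbf{1}$ gives $P=(\lambda^{2}-1)\mathbf{1}\mathbf{1}^{T}$, and the built-in constraint $P_{ii}=\sum_{j\ne i}P_{ij}$ then forces $(\lambda^{2}-1)=(n-1)(\lambda^{2}-1)$, hence $\lambda^{2}=1$ and $P=0$ for $n\ge 3$ (the case $n=2$ being trivial, as $K_2$ is the only connected graph there). Thus $d_{ij}=1$ for all $i\ne j$, i.e.\ $G=K_n$; conversely $K_n$ gives $P=0$ and $\delta_2^Q=n-2$. The main obstacle is exactly this last rigidity step — converting ``$\operatorname{rank}(J+P)\le 1$'' into ``$P=0$'' — for which the weighted-Laplacian structure of $P$, namely that its diagonal equals the row sums of its off-diagonal part, is essential.
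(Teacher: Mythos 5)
Your argument is correct and complete. Note that the paper itself offers no proof of this lemma --- it is quoted verbatim from Aouchiche and Hansen \cite{2013} --- so there is no internal proof to compare against; your write-up is in effect a reconstruction of their argument. The decomposition $\mathcal{D}^Q(G)=(n-2)I+J+P$ with $P=\mathcal{D}^Q(G)-\mathcal{D}^Q(K_n)$, together with the observation that $P$ is the signless Laplacian of the nonnegatively weighted complete graph with weights $d_{ij}-1$ and hence satisfies $x^TPx=\sum_{i<j}(d_{ij}-1)(x_i+x_j)^2\ge 0$, is exactly the right structural fact: it gives $\delta_i^Q(G)\ge n-2$ for all $i$ at once, and the Rayleigh quotient at $\mathbf{1}$ handles $\delta_1^Q$. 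Your treatment of the equality case is the part that genuinely needs care, and it holds up: $\mu_2=0$ forces $\mathrm{rank}(J+P)\le 1$, the rank-one rigidity argument forces $v\parallel\mathbf{1}$ (your orthogonal-witness argument inside $\mathrm{span}\{v,\mathbf{1}\}$ is sound), and the row-sum constraint $P_{ii}=\sum_{j\ne i}P_{ij}$ kills the remaining scalar for $n\ge 3$. One cosmetic remark: the lemma as printed in the paper says ``$\delta_2^Q(G)\ge n-2$ if and only if $G=K_n$,'' which is clearly a typo for equality; you silently corrected this, which is the right reading.
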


\begin{theo}\label{thm 2.5}
Let $G$ be a simple connected graph with $n$ vertices, $\delta_1(G)$ be the distance spectral radius of $G$ and $\mathcal{D}_1$ be the largest transmission of $G$. Then $\mathcal{D}_1\leq\delta_1^Q(G)\leq \delta_1(G)+\mathcal{D}_1$. Moreover, $\delta_1^Q(G)=\delta_1(G)+\mathcal{D}_1$ if and only if $G$ is a transmission regular graph.
\end{theo}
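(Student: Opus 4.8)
The plan is to establish the two inequalities separately: the upper bound will be a direct instance of Theorem \ref{thm 3.1}, while the lower bound will be a one-line Rayleigh-quotient estimate. The equality characterization will then be inherited wholesale from Theorem \ref{thm 3.1}.

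For the upper bound, the key observation is that $\mathcal{D}^Q$ already has the exact shape demanded by Theorem \ref{thm 3.1}. I would set $A=\mathcal{D}(G)$. Since $G$ is connected, $A$ is nonnegative, irreducible and symmetric, and its $i$th row sum is precisely the transmission $\mathcal{D}_i$, so the row sums are $\mathcal{D}_1\geq\mathcal{D}_2\geq\cdots\geq\mathcal{D}_n$. Hence the diagonal matrix of row sums is $M=diag(\mathcal{D}_1,\ldots,\mathcal{D}_n)=Tr(G)$, and $B=A+M=\mathcal{D}(G)+Tr(G)=\mathcal{D}^Q$. In this dictionary $\lambda(A)=\delta_1(G)$, $r_1=\mathcal{D}_1$, and $\lambda(B)=\delta_1^Q(G)$. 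Applying Theorem \ref{thm 3.1} yields $\delta_1^Q(G)\leq\delta_1(G)+\mathcal{D}_1$, and its equality clause says equality holds iff the row sums of $\mathcal{D}(G)$ are all equal, i.e. $\mathcal{D}_1=\cdots=\mathcal{D}_n$, which is exactly the assertion that $G$ is transmission regular.

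For the lower bound, I would use that $\mathcal{D}^Q$ is real symmetric and its diagonal entries are $(\mathcal{D}^Q)_{ii}=\mathcal{D}_i+d_{ii}=\mathcal{D}_i$, so its largest diagonal entry equals $\mathcal{D}_1$. For any real symmetric matrix the largest eigenvalue dominates every diagonal entry, since $\delta_1^Q(G)=\max_{\|x\|=1}x^{T}\mathcal{D}^Q x\geq e_k^{T}\mathcal{D}^Q e_k=\mathcal{D}_k$ for each standard basis vector $e_k$; taking the index $k$ realizing $\mathcal{D}_1$ gives $\delta_1^Q(G)\geq\mathcal{D}_1$. (Alternatively one may invoke Lemma \ref{lem 2.4} together with the positive semidefiniteness of $\mathcal{D}^Q$ that follows from the eigenvalue bounds in Lemma \ref{lem 2.3}, but the Rayleigh estimate is self-contained.)

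Combining the two inequalities gives $\mathcal{D}_1\leq\delta_1^Q(G)\leq\delta_1(G)+\mathcal{D}_1$, with the stated equality characterization carried over from Theorem \ref{thm 3.1}. There is no genuine analytic obstacle here; the only non-routine step is the structural recognition that $\mathcal{D}(G)$ and $Tr(G)$ stand in the precise $A$/$M$ relationship of Theorem \ref{thm 3.1}. Once that identification is made, both the inequality and its equality analysis follow with no further computation.
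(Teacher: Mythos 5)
Your proposal is correct, and the main part --- the upper bound together with its equality characterization --- is obtained exactly as in the paper, by recognizing $\mathcal{D}^Q=\mathcal{D}(G)+Tr(G)$ as the $B=A+M$ of Theorem \ref{thm 3.1} with $A=\mathcal{D}(G)$ and $r_1=\mathcal{D}_1$. The only divergence is in the lower bound $\delta_1^Q(G)\geq\mathcal{D}_1$: the paper first invokes Lemma \ref{lem 2.3} to conclude $\delta_n^Q(G)\geq n-2\geq 0$, hence that $\mathcal{D}^Q$ is positive semidefinite, and then applies the Schur majorization result (Lemma \ref{lem 2.4}) to compare $\delta_1^Q(G)$ with the largest diagonal entry $\mathcal{D}_1$. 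Your Rayleigh-quotient argument, $\delta_1^Q(G)=\max_{\|x\|=1}x^T\mathcal{D}^Qx\geq e_k^T\mathcal{D}^Qe_k=\mathcal{D}_k$, reaches the same conclusion for any real symmetric matrix without needing positive semidefiniteness at all, so it is more self-contained and drops the dependence on Lemma \ref{lem 2.3}; the paper's route buys nothing extra here beyond consistency with the machinery it has already set up (and which it reuses in Theorem \ref{thm 3.4}). You correctly note the paper's alternative yourself, so both arguments are in hand; either is acceptable.
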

\begin{proof}
Denote the spectrum of $\mathcal{D}^Q$ by $sp(\mathcal{D}^Q)=(\delta_1^Q(G), \delta_2^Q(G), \ldots, \delta_2^Q(G))$ and the
transmissions $\mathcal{D}_1, \mathcal{D}_2, \ldots, \mathcal{D}_n$ of $G$ satisfy $\mathcal{D}_1\geq \mathcal{D}_2 \geq\cdots\geq \mathcal{D}_n$. Note that $\delta_n^Q(G)\geq \delta_n^Q(K_n)=n-2\geq0$ for $n\geq 2$ by Lemma \ref{lem 2.3}, then $\mathcal{D}^Q$ is a positive semidefinite Hermitian matrix. By Lemma \ref{lem 2.4}, we have $\delta_1^Q(G)\geq \mathcal{D}_1$.

By Theorem \ref{thm 3.1}, we directly get the $\delta_1^Q(G)\leq \delta_1(G)+\mathcal{D}_1$ with equality if and only if $G$ is a transmission regular graph.
\end{proof}

\begin{defn}\label{defn 2.1}{\rm(\cite{2010b})}
Let $G$ be a simple connected graph with $n$ vertices, a distance matrix $\mathcal{D}=(d_{ij})$ and the transmission sequence $\{ \mathcal{D}_1, \mathcal{D}_2, \ldots, \mathcal{D}_n\}$ with $\mathcal{D}_1\geq \mathcal{D}_2 \geq\cdots\geq \mathcal{D}_n$. Then the second distance degree of $v_i$, denoted by $T_i$, is given by $T_i=\sum\limits_{i=1}^{n}d_{ij}\mathcal{D}_i$.
\end{defn}

We can easily obtain Theorems \ref{thm 3.7} and \ref{thm 3.8} from Theorems \ref{thm 3.4} and \ref{thm 3.5}, respectively.
\begin{theo}\label{thm 3.7}
Let $G$ be a simple connected graph with $n$ vertices, $\{\mathcal{D}_1, \mathcal{D}_2, \cdots, \mathcal{D}_n\}$ be the transmission sequence of $G$ with
$\mathcal{D}_1\geq \mathcal{D}_2 \geq\cdots \geq \mathcal{D}_n$ and $T_1, T_2,\ldots, T_n$ be the second distance degree sequence of $G$. Then
\vskip.2cm
\hskip0.8cm $\delta_1^Q(G)\leq \max\limits_{1\leq i, j\leq n}\left\{\frac{\mathcal{\mathcal{D}}_i+\mathcal{D}_j+\sqrt{(\mathcal{D}_i-\mathcal{D}_j)^2+\frac{4T_iT_j}{\mathcal{D}_i\mathcal{D}_j}}}{2}\right\}$,
\vskip.2cm
\noindent with equality if and only if $G$ is a transmission regular graph.
\end{theo}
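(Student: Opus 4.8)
The plan is to apply Theorem \ref{thm 3.4} directly to the distance matrix $A=\mathcal{D}(G)$, for which the associated matrix $B=A+M$ is precisely the distance signless Laplacian $\mathcal{D}^Q$. First I would check that $A=\mathcal{D}(G)$ satisfies every hypothesis of Theorem \ref{thm 3.4}: it is nonnegative; it is symmetric and irreducible because $G$ is connected; and its diagonal entries $d_{ii}=d_G(v_i,v_i)$ all vanish. Its row sums are exactly the transmissions, $r_i=\sum_{j=1}^{n}d_{ij}=\mathcal{D}_i$, and these are already ordered $\mathcal{D}_1\geq\cdots\geq\mathcal{D}_n$ as the theorem requires. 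Taking $M=\mathrm{diag}(\mathcal{D}_1,\ldots,\mathcal{D}_n)=Tr(G)$ then gives $B=\mathcal{D}(G)+Tr(G)=\mathcal{D}^Q$, so that $\lambda(B)=\delta_1^Q(G)$.

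Next I would identify the auxiliary quantity $s_i$ of Theorem \ref{thm 3.4} with the second distance degree. Directly from the definition, $s_i=\sum_{j=1}^{n}a_{ij}r_j=\sum_{j=1}^{n}d_{ij}\mathcal{D}_j=T_i$, which is exactly the second distance degree of $v_i$ in Definition \ref{defn 2.1}. Substituting $r_i=\mathcal{D}_i$, $r_j=\mathcal{D}_j$, $s_i=T_i$, and $s_j=T_j$ into the inequality furnished by Theorem \ref{thm 3.4} then reproduces verbatim the claimed upper bound for $\delta_1^Q(G)$.

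The single hypothesis of Theorem \ref{thm 3.4} that is not automatic, and the step I expect to be the crux, is the requirement that $B=\mathcal{D}^Q$ be positive semidefinite Hermitian. I would establish this exactly as in the proof of Theorem \ref{thm 2.5}: the matrix $\mathcal{D}^Q$ is real symmetric, and by Lemma \ref{lem 2.3} its least eigenvalue satisfies $\delta_n^Q(G)\geq\delta_n^Q(K_n)=n-2\geq0$ for $n\geq2$, whence all its eigenvalues are nonnegative. With positive semidefiniteness secured, Theorem \ref{thm 3.4} applies without modification.

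Finally, the equality characterization transfers immediately. Theorem \ref{thm 3.4} yields equality precisely when the row sums of $A$ are all equal; since the row sums of $A=\mathcal{D}(G)$ are the transmissions $\mathcal{D}_i$, equality holds if and only if $\mathcal{D}_1=\cdots=\mathcal{D}_n$, that is, if and only if $G$ is transmission regular. In effect essentially all of the substance lies in the positive semidefiniteness verification via Lemma \ref{lem 2.3}, every remaining step being a direct substitution into Theorem \ref{thm 3.4}.
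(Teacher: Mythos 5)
Your proposal is correct and follows essentially the same route as the paper: apply Theorem \ref{thm 3.4} with $A=\mathcal{D}(G)$, $M=Tr(G)$, $r_i=\mathcal{D}_i$, $s_i=T_i$, so that $B=\mathcal{D}^Q$. In fact you are slightly more careful than the paper's one-line proof, since you explicitly verify the positive semidefiniteness hypothesis of Theorem \ref{thm 3.4} via Lemma \ref{lem 2.3} (as the paper does only in its proof of Theorem \ref{thm 2.5}).
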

\begin{proof}
Note that $\mathcal{D}^{Q}=Tr(G)+\mathcal{D}(G)$, apply Theorem \ref{thm 3.4} to $\mathcal{D}^{Q}$. Since $r_i=\mathcal{D}_i$ and $S_i=T_i$ for $i=1, 2, \ldots, n$, we have the desired upper bound for $\delta_1^Q(G)$.
\end{proof}

\begin{theo}\label{thm 3.8}
Let $G$ be a simple connected graph with $n$ vertices, $\{\mathcal{D}_1, \mathcal{D}_2, \ldots, \mathcal{D}_n\}$ be the transmission sequence of $G$ with $\mathcal{D}_1\geq\mathcal{D}_2\geq\cdots\geq\mathcal{D}_n$ and $d$ be the diameter of $G$. Then for $i=1, 2, \ldots, n$,
\vskip.2cm
\hskip0.8cm $\delta_1^Q(G)\leq \frac{2\mathcal{D}_i+\mathcal{D}_1-d+\sqrt{(2\mathcal{D}_i+d-\mathcal{D}_1)^2+8(i-1)(\mathcal{D}_1-\mathcal{D}_i)d}}{2}$.
\vskip.2cm
\noindent Moreover, if $i=1$, the equality holds if and only if $G$ is a transmission regular graph;
and if $2\leq i\leq n$, the equality holds if and only if $G\cong K_n$.
\end{theo}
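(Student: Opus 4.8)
The plan is to apply Theorem \ref{thm 3.5} to the matrix $A=\mathcal{D}^Q=Tr(G)+\mathcal{D}(G)$, in the same spirit in which Theorems \ref{thm 2.3}, \ref{thm 2.5} and \ref{thm 3.7} fed the abstract results of Section 2 into $\mathcal{D}^Q$. First I would record the data that Theorem \ref{thm 3.5} requires. The matrix $\mathcal{D}^Q=(q_{ij})$ has diagonal entries $q_{ii}=\mathcal{D}_i$ and off-diagonal entries $q_{ij}=d_{ij}$, so its $i$th row sum is $r_i=\mathcal{D}_i+\sum_{j}d_{ij}=2\mathcal{D}_i$ (as already noted before Theorem \ref{thm 2.3}); the largest diagonal entry is $M=\mathcal{D}_1$; and the largest off-diagonal entry is the largest distance occurring in $G$, namely the diameter, so $N=d$. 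Since $G$ is connected, $\mathcal{D}(G)$ is nonnegative and irreducible, hence so is $\mathcal{D}^Q$, and the hypotheses of Theorem \ref{thm 3.5} are met.

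The inequality is then a direct substitution: inserting $r_i=2\mathcal{D}_i$, $r_1=2\mathcal{D}_1$, $M=\mathcal{D}_1$ and $N=d$ into the bound of Theorem \ref{thm 3.5}, and using $4(i-1)(r_1-r_i)N=8(i-1)(\mathcal{D}_1-\mathcal{D}_i)d$, reproduces precisely the stated upper bound for $\delta_1^Q(G)=\lambda(\mathcal{D}^Q)$. No further estimation is needed; this step is purely bookkeeping once the dictionary above is fixed.

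For the equality discussion I would invoke the corresponding clause of Theorem \ref{thm 3.5}. When $i=1$, equality holds iff all row sums of $\mathcal{D}^Q$ are equal, i.e.\ all $2\mathcal{D}_i$ are equal, which is exactly the statement that $G$ is transmission regular. When $2\le i\le n$, I translate conditions $\rm{(i)}$--$\rm{(iii)}$ of Theorem \ref{thm 3.5}. Condition $\rm{(ii)}$ asserts that every off-diagonal entry of $\mathcal{D}^Q$ equals $N=d$, that is, $d_{lk}=d$ for all $l\ne k$; but $G$ is connected on $n\ge 2$ vertices, so it contains an edge and hence a pair of vertices at distance $1$, and since all off-diagonal distances coincide this forces $d=1$. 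Then every pairwise distance equals $1$, i.e.\ $G\cong K_n$. Conversely, for $K_n$ every transmission equals $n-1$ and the diameter is $1$, so $\rm{(i)}$--$\rm{(iii)}$ all hold and equality is attained; thus for $2\le i\le n$ equality holds iff $G\cong K_n$.

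The step I expect to be the crux is exactly this equality analysis for $2\le i\le n$, and the point worth emphasizing is the contrast with Corollary \ref{cor 2.9}. There the analogous conditions left room for a bidegreed graph, because for the ordinary signless Laplacian the maximal off-diagonal entry $N=1$ can be attained at some off-diagonal positions without forcing the whole graph. Here, however, condition $\rm{(ii)}$ demands that the maximal off-diagonal entry, the diameter, be attained at \emph{every} off-diagonal position, and connectivity always supplies a distance-$1$ pair; this collapses the condition to $d=1$ and forces the complete graph, so no bidegreed alternative survives. The remaining verifications follow Section 2 verbatim.
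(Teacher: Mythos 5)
Your proposal is correct and follows exactly the paper's route: apply Theorem \ref{thm 3.5} to $\mathcal{D}^Q$ with the dictionary $r_i=2\mathcal{D}_i$, $M=\mathcal{D}_1$, $N=d$, and read off the equality cases from Lemma \ref{lem 2.2} (for $i=1$) and conditions $\rm{(i)}$--$\rm{(iii)}$ (for $2\le i\le n$). The only difference is that you spell out the step the paper leaves implicit --- that condition $\rm{(ii)}$ plus connectivity forces $d=1$ and hence $G\cong K_n$ --- which is a welcome addition rather than a deviation.
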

\begin{proof}
Apply Theorem \ref{thm 3.5} to $\mathcal{D}^Q(G)$. Since $M=\mathcal{D}_1$, $N=d$ and $r_i=2\mathcal{D}_i$ for $i=1, 2, \ldots, n$,
 we have the desired upper bound for $\delta_1^Q(G)$. If $i=1$, the equality holds if and only if $G$ is a transmission regular graph;
and if $2\leq i\leq n$, the equality holds if and only if $G\cong K_n$ by the conditions $\rm{(i)}-\rm{(iii)}$ in Theorem \ref{thm 3.5}.
\end{proof}

\begin{theo}\label{thm 3.2}
Let $G$ be a simple connected graph with $n$ vertices, $\{\mathcal{D}_1, \mathcal{D}_2, \ldots, \mathcal{D}_n\}$ be the transmission sequence of $G$ with $\mathcal{D}_1\geq \mathcal{D}_2 \geq\cdots \geq \mathcal{D}_n$ and $\{T_1, T_2, \ldots, T_n\}$ be the second distance degree sequence of $G$. Then
\vskip.1cm
\hskip0.8cm$\min\left\{\mathcal{D}_i+\frac{T_i}{\mathcal{D}_{i}}: 1\leq i\leq n\right\}\leq \delta_1^Q(G)\leq \max\left\{\mathcal{D}_i+\frac{T_i}{\mathcal{D}_{i}}: 1\leq i\leq n\right\}$.
\vskip.1cm
\noindent Moreover, any equality holds if and only if $G$ has the same value $\mathcal{D}_i+\frac{T_i}{\mathcal{D}_{i}}$ for all $i$.
\end{theo}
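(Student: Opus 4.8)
The plan is to exhibit a matrix that is similar to $\mathcal{D}^Q$ and whose row sums are exactly the quantities $\mathcal{D}_i+\frac{T_i}{\mathcal{D}_i}$, and then to read off both inequalities and the equality condition from Lemma \ref{lem 2.2}.

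First I would set $M=Tr(G)=diag(\mathcal{D}_1,\mathcal{D}_2,\ldots,\mathcal{D}_n)$. Since $G$ is connected on $n\geq 2$ vertices, every transmission $\mathcal{D}_i$ is strictly positive, so $M$ is an invertible positive diagonal matrix. Form $N=M^{-1}\mathcal{D}^Q M$. Being similar to $\mathcal{D}^Q$, the matrix $N$ has the same spectrum, so its spectral radius is $\delta_1^Q(G)$; moreover, conjugation by a positive diagonal matrix preserves both nonnegativity and irreducibility, so $N$ is nonnegative and irreducible.

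Next I would compute the row sums of $N$. Writing $\mathcal{D}^Q=(q_{ij})$ with $q_{ii}=\mathcal{D}_i$ and $q_{ij}=d_{ij}$ for $i\neq j$, the $(i,j)$ entry of $N$ is $\frac{\mathcal{D}_j}{\mathcal{D}_i}q_{ij}$, so the $i$th row sum of $N$ is
\[
\frac{1}{\mathcal{D}_i}\left(\mathcal{D}_i^2+\sum_{j\neq i}d_{ij}\mathcal{D}_j\right)=\mathcal{D}_i+\frac{T_i}{\mathcal{D}_i},
\]
where I use $d_{ii}=0$ together with the definition $T_i=\sum_{j=1}^n d_{ij}\mathcal{D}_j$ from Definition \ref{defn 2.1}. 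Applying Lemma \ref{lem 2.2} to the nonnegative matrix $N$ then yields $\min_i\{\mathcal{D}_i+\frac{T_i}{\mathcal{D}_i}\}\leq\delta_1^Q(G)\leq\max_i\{\mathcal{D}_i+\frac{T_i}{\mathcal{D}_i}\}$, and, since $N$ is irreducible, the equality clause of Lemma \ref{lem 2.2} gives that either bound is attained if and only if all row sums of $N$ coincide, that is, if and only if $\mathcal{D}_i+\frac{T_i}{\mathcal{D}_i}$ takes the same value for every $i$.

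The argument is essentially immediate once the correct diagonal similarity is selected, so I do not expect a genuine obstacle. The only point requiring a little care is verifying that the conjugation $M^{-1}\mathcal{D}^Q M$ preserves irreducibility (and nonnegativity), since this is exactly what licenses the ``if and only if'' part of Lemma \ref{lem 2.2}; the inequalities themselves follow from the same lemma without the irreducibility hypothesis.
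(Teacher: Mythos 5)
Your proposal is correct and matches the paper's own proof essentially verbatim: the paper also conjugates $\mathcal{D}^Q$ by $Tr(G)$, computes the row sums of $Tr(G)^{-1}\mathcal{D}^Q Tr(G)$ as $\mathcal{D}_i+\frac{T_i}{\mathcal{D}_i}$, and invokes Lemma \ref{lem 2.2} for both the bounds and the equality condition. No differences worth noting.
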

\begin{proof}
Note that $\mathcal{D}^Q=(q_{ij})$, while
$q_{ij}=\left\{
\begin{array}{cc}
 \mathcal{D}_i , & \mbox{if } i=j; \\[0.2mm]
 d_{ij}, & \mbox{if } i\neq j.
\end{array}
\right.$

 Then the $(i, j)$--entry of
 $Tr(G)^{-1}\mathcal{D}^QTr(G)$ is $\frac{q_{ij}\mathcal{D}_j}{\mathcal{D}_i}$ and the row sum of $Tr(G)^{-1}\mathcal{D}^QTr(G)$ is
 \vskip.1cm
\noindent $r_i(Tr(G)^{-1}\mathcal{D}^QTr(G))=\mathcal{D}_i+\frac{T_i}{\mathcal{D}_i}$ for $i=1, 2, \ldots, n$.
\vskip.2cm
It is easy to show that $Tr(G)^{-1}\mathcal{D}^QTr(G)$ is a nonnegative and irreducible $n\times n$ matrix with
 spectral radius $\delta_1^Q(G)$. By Lemma \ref{lem 2.2}, the desired results hold.
\end{proof}

\begin{theo}\label{thm 3.3}
Let $G$ be a simple connected graph with $n$ vertices, $\{\mathcal{D}_1, \mathcal{D}_2,\ldots, \mathcal{D}_n\}$ be the transmission sequence of $G$ with $\mathcal{D}_1\geq \mathcal{D}_2 \geq\cdots \geq \mathcal{D}_n$ and $\{T_1, T_2,\ldots, T_n\}$ be the second distance degree sequence of $G$. Then
\vskip.2cm
\hskip0.8cm $\min\left\{\sqrt{2T_i+2\mathcal{D}_i^2}: 1\leq i\leq n\right\}\leq \delta_1^Q(G)\leq \max\left\{\sqrt{2T_i+2\mathcal{D}_i^2}: 1\leq i\leq n\right\}$.
\vskip.2cm
\noindent Moreover, if $(\mathcal{D}^Q)^2$ is irreducible, then each equality holds if and only if $G$ has the same value
\vskip.2cm
\noindent $T_i+\mathcal{D}_i^2$ for all $i$; and if $(\mathcal{D}^Q)^2$ is reducible, then any equality holds if and only if there exist the permutation matrix $P$ such that
\vskip.2cm
\hskip0.8cm $P^T\mathcal{D}^QP=\begin{bmatrix}O_r & \mathcal{D}^Q_1\\\mathcal{D}^Q_2 & O_{n-r}\end{bmatrix}$,
\vskip.2cm
\noindent and $\mathcal{D}_{\sigma(1)}=\mathcal{D}_{\sigma(2)}=\cdots=\mathcal{D}_{\sigma(r)}$, $\mathcal{D}_{\sigma(r+1)}=\mathcal{D}_{\sigma(r+2)}=\cdots=\mathcal{D}_{\sigma(n)}$, where $\sigma$ is a permutation
\vskip.1cm
\noindent on the set $\{1,2, \ldots, n\}$ which corresponds to the permutation matrix $P$.
\end{theo}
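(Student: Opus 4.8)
The plan is to deduce everything from a single application of Lemma \ref{lem 2.2}, but applied to the matrix $(\mathcal{D}^Q)^2$ rather than to $\mathcal{D}^Q$ itself. The first step is a row-sum computation. Writing $\mathcal{D}^Q=(q_{ij})$ with $q_{ii}=\mathcal{D}_i$ and $q_{ij}=d_{ij}$ for $i\neq j$, and recalling from Section 2 that the $j$th row sum of $\mathcal{D}^Q$ equals $2\mathcal{D}_j$, the $i$th row sum of $(\mathcal{D}^Q)^2$ is
\[
\sum_{k=1}^n\big((\mathcal{D}^Q)^2\big)_{ik}=\sum_{j=1}^n q_{ij}\Big(\sum_{k=1}^n q_{jk}\Big)=\sum_{j=1}^n q_{ij}\,2\mathcal{D}_j=2\Big(\mathcal{D}_i^2+\sum_{j\neq i}d_{ij}\mathcal{D}_j\Big)=2\mathcal{D}_i^2+2T_i.
\]
Thus the row sums of $(\mathcal{D}^Q)^2$ are exactly the quantities $2T_i+2\mathcal{D}_i^2$ appearing in the statement.

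Next I would identify the spectral radius of $(\mathcal{D}^Q)^2$. Since $\mathcal{D}^Q$ is nonnegative, symmetric and irreducible, $\delta_1^Q(G)$ is its Perron root, the eigenvalues of $(\mathcal{D}^Q)^2$ are the squares $(\delta_i^Q(G))^2$, and as established in the proof of Theorem \ref{thm 2.5} the matrix $\mathcal{D}^Q$ is positive semidefinite, so $\delta_1^Q(G)\geq 0$ is the largest of these in absolute value and $\lambda\big((\mathcal{D}^Q)^2\big)=(\delta_1^Q(G))^2$. Applying the inequality part of Lemma \ref{lem 2.2} to the nonnegative matrix $(\mathcal{D}^Q)^2$ then gives
\[
\min_{1\leq i\leq n}(2T_i+2\mathcal{D}_i^2)\leq (\delta_1^Q(G))^2\leq \max_{1\leq i\leq n}(2T_i+2\mathcal{D}_i^2),
\]
and taking square roots, all quantities being nonnegative, yields the asserted bounds.

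For the equality discussion I would split on the irreducibility of $(\mathcal{D}^Q)^2$. When $(\mathcal{D}^Q)^2$ is irreducible, Lemma \ref{lem 2.2} applies verbatim: one of the two bounds is attained if and only if all the row sums $2T_i+2\mathcal{D}_i^2$ are equal, i.e.\ $T_i+\mathcal{D}_i^2$ is constant in $i$, which is precisely the stated condition. The delicate case, and the one I expect to be the main obstacle, is when $(\mathcal{D}^Q)^2$ is reducible, because Lemma \ref{lem 2.2} provides an equality criterion only in the irreducible setting. Here I would use that $\mathcal{D}^Q$ is itself irreducible, so $(\mathcal{D}^Q)^2$ can fail to be irreducible only when $\mathcal{D}^Q$ has even index of imprimitivity; by the Perron--Frobenius cyclic normal form this is exactly the situation in which some permutation $P$ brings $\mathcal{D}^Q$ to the anti-block-diagonal form $P^T\mathcal{D}^Q P=\left[\begin{smallmatrix}O_r & \mathcal{D}^Q_1\\ \mathcal{D}^Q_2 & O_{n-r}\end{smallmatrix}\right]$, whereupon $(\mathcal{D}^Q)^2$ becomes block diagonal with the two irreducible blocks $\mathcal{D}^Q_1\mathcal{D}^Q_2$ and $\mathcal{D}^Q_2\mathcal{D}^Q_1$. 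The spectral radius $(\delta_1^Q(G))^2$ is then attained on one of these diagonal blocks, and the irreducible equality criterion forces the row sums of the extremal block to be constant; running this over both blocks and translating back through $r_i=2\mathcal{D}_i$ produces the two-level transmission conditions $\mathcal{D}_{\sigma(1)}=\cdots=\mathcal{D}_{\sigma(r)}$ and $\mathcal{D}_{\sigma(r+1)}=\cdots=\mathcal{D}_{\sigma(n)}$.

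I would emphasize one subtlety that must be addressed honestly in the write-up. The transmissions $\mathcal{D}_i$ of a connected graph on $n\geq 2$ vertices are strictly positive, so the diagonal of $\mathcal{D}^Q$ is everywhere positive; hence $\mathcal{D}^Q$ is in fact primitive, its square is always irreducible, and the anti-block-diagonal form with vanishing diagonal blocks $O_r,O_{n-r}$ can never occur. Consequently the reducible alternative is vacuous for a genuine distance signless Laplacian, and the real content of the theorem is the irreducible case; the reducible clause is retained only to match the general nonnegative-matrix template. The step I anticipate being hardest, therefore, is not a computation but the bookkeeping needed to state the reducible case correctly and to verify via the Perron--Frobenius cyclic structure that it cannot arise here.
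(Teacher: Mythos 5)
Your proof follows essentially the same route as the paper's: compute the row sums of $(\mathcal{D}^Q)^2$ as $2T_i+2\mathcal{D}_i^2$, identify $(\delta_1^Q(G))^2$ as the spectral radius of $(\mathcal{D}^Q)^2$ via the Perron vector, and apply Lemma \ref{lem 2.2} together with its equality criterion in the irreducible case. The one place you genuinely go beyond the paper is the reducible case: the paper's ``proof'' there consists of restating the claim verbatim, whereas you correctly observe that since every entry of $\mathcal{D}^Q$ is positive (the diagonal entries are the transmissions $\mathcal{D}_i\geq n-1>0$ and the off-diagonal entries are distances $d_{ij}\geq 1$), the matrix $(\mathcal{D}^Q)^2$ is always irreducible and the anti-block-diagonal alternative with zero diagonal blocks can never occur. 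That observation is a real improvement in rigor over the published argument, and it exposes the reducible clause of the theorem as vacuous; the cyclic-normal-form discussion you sketch for that clause is therefore never needed, which is just as well, since the claimed equivalence between equality and the two-level transmission condition in that clause is not actually justified by the paper either.
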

\begin{proof}
Since $\mathcal{D}^Q=(q_{ij})$, then $((\mathcal{D}^Q)^2)_{ij}=\sum\limits_{k=1}^nq_{ik}q_{kj}$ and the row sum of $(\mathcal{D}^Q)^2$ is
\vskip.2cm
\hskip0.8cm $r_i((\mathcal{D}^Q)^2)=\sum\limits_{j=1}^n\sum\limits_{k=1}^nq_{ik}q_{kj}$
$=\sum\limits_{k=1}^nq_{ik}\sum\limits_{j=1}^nq_{kj}$
$=2\sum\limits_{k=1}^nq_{ik}\mathcal{D}_k$
$=2T_i+2\mathcal{D}_i^2$.

 Let $x$ be the unit Perron vector corresponding to $\delta_1^Q(G)$. Then $\mathcal{D}^Qx=\delta_1^Q(G)x$ and $(\mathcal{D}^Q)^2x=(\delta_1^Q(G))^2x$. By Lemma \ref{lem 2.2},
\vskip.2cm
\hskip0.8cm $\min\left\{2T_i+2\mathcal{D}_i^2: 1\leq i\leq n\right\}\leq (\delta_1^Q(G))^2=\delta_1((\mathcal{D}^Q)^2)\leq \max\left\{2T_i+2\mathcal{D}_i^2: 1\leq i\leq n\right\}$.
\vskip.2cm
 Thus $\min\left\{\sqrt{2T_i+2\mathcal{D}_i^2}: 1\leq i\leq n\right\}\leq \delta_1^Q(G)\leq \max\left\{\sqrt{2T_i+2\mathcal{D}_i^2}: 1\leq i\leq n\right\}$.
\vskip.2cm
If $(\mathcal{D}^Q)^2$ is irreducible, then we assume that $G$ has the same value $T_i+\mathcal{D}_i^2$ for all $i$. Thus
$\min\left\{\sqrt{2T_i+2\mathcal{D}_i^2}: 1\leq i\leq n\right\}=\max\left\{\sqrt{2T_i+2\mathcal{D}_i^2}: 1\leq i\leq n\right\}$.
\vskip.1cm
Both of the equalities hold.
\vskip.1cm
Conversely, if one of the equalities holds, that is,
\vskip.2cm
\hskip0.8cm $(\delta_1^Q(G))^2=\min\left\{2T_i+2\mathcal{D}_i^2 : 1\leq i\leq n\right\}$ or $(\delta_1^Q(G))^2=\min\left\{2T_i+2\mathcal{D}_i^2 : 1\leq i\leq n\right\}$,
\vskip.2cm
\noindent by Lemma \ref{lem 2.2}, the row sums of $\mathcal{D}^Q$, $r_i((\mathcal{D}^Q)^2)=2T_i+2\mathcal{D}_i^2$, where $1\leq i\leq n$, are all equal.
\vskip.1cm
\noindent Thus $G$ has the same value $T_i+\mathcal{D}_i^2$ for all $i$.
\vskip.2cm
If $(\mathcal{D}^Q)^2$ is reducible, then any equality holds if and only if there exist the permutation matrix $P$ such that
\vskip.2cm
\hskip0.8cm $P^T\mathcal{D}^QP=\begin{bmatrix}O_r & \mathcal{D}^Q_1\\\mathcal{D}^Q_2 & O_{n-r}\end{bmatrix}$,
\vskip.2cm
\noindent and $\mathcal{D}_{\sigma(1)}=\mathcal{D}_{\sigma(2)}=\cdots=\mathcal{D}_{\sigma(r)}$, $\mathcal{D}_{\sigma(r+1)}=\mathcal{D}_{\sigma(r+2)}=\cdots=\mathcal{D}_{\sigma(n)}$, where $\sigma$ is a permutation
\vskip.1cm
\noindent on the set $\{1,2, \ldots, n\}$ which corresponds to the permutation matrix $P$.
\end{proof}


\end{document}